
\documentclass[11pt]{amsart}
\usepackage{amsmath}
\usepackage[all]{xy}

\usepackage{amssymb, amsmath, amsthm, mathrsfs}

\textwidth=160mm \oddsidemargin=0mm \evensidemargin=\oddsidemargin
\headsep=8mm

 \newtheorem{thm}{Theorem}[section]
 \newtheorem{cor}[thm]{Corollary}
 \newtheorem{lem}[thm]{Lemma}
 \newtheorem{prop}[thm]{Proposition}

    \theoremstyle{definition}
 \newtheorem{defn}[thm]{Definition}
   \newtheorem{notn}[thm]{Assumption}

  \newtheorem{exam}[thm]{Example}
 \theoremstyle{remark}
 \newtheorem{rem}[thm]{Remark}
\numberwithin{equation}{section}

\newcommand {\de}{\operatorname {d}}
\newcommand {\vol}{\operatorname {vol}}




\begin{document}

\title[ Poisson double extensions]{ Poisson double extensions}

 \author{Qi Lou, Sei-Qwon Oh, S.-Q. Wang}
\address{School of Mathematical Sciences, Fudan University, Shanghai 200433, China}
\email{qlou@fudan.edu.cn}
\address{Department of Mathematics, Chungnam National  University, 99 Daehak-ro,   Yuseong-gu, Daejeon 34134, Korea}
\email{sqoh@cnu.ac.kr}
\address{Department of Mathematics, East China  University of Science and Technology, Shanghai 200237, China}
\email{sqwang@ecust.edu.cn}


\subjclass[2010]{17B63, 16S36}

\keywords{Poisson double extension, semiclassical limit, deformation quantization}



\begin{abstract}
A double Ore extension was introduced by James Zhang and Jun Zhang \cite{ZhZh} to study a class of Artin-Schelter regular algebras. Here we give a  definition of Poisson double extension which may be considered as an analogue of double Ore extension and show that algebras in a class of double Ore extensions are deformation quantizations of Poisson double extensions. We also investigate
 the modular derivations of Poisson double extensions and
 the relationship between Poisson double extensions and
iterated Poisson polynomial extensions. Results are illustrated by examples.
\end{abstract}

\maketitle


\section*{Introduction}
There are many kinds of noncommutative algebras appearing in noncommutative geometry, which are obtained from deforming certain  commutative algebras.  Artin-Schelter regular algebras \cite{ArSc}, which may be  considered as noncommutative analogues of  commutative polynomial rings, play an important role in noncommutative geometry and have been studied by many mathematicians.
In order to study  Artin-Schelter regular algebras of global dimension 4, James Zhang and Jun Zhang \cite{ZhZh} defined  a very useful concept ``double Ore extension" for constructing  Artin-Schelter regular algebras. The double Ore extension  may be considered as a generalization of an Ore extension \cite[\S1]{ZhZh}.
In \cite[Proposition 4.1]{LaLe}, Launois and  Lecoutre proved that a class of Ore extensions can be viewed as deformation quantizations of Poisson polynomial extensions, in other words, Poisson polynomial extensions correspond to Ore extensions by taking semiclassical limits in these cases. It arises a question whether  a double Ore extension is a deformation quantization of a Poisson algebra or not.
Here, we find  a class of Poisson algebras  such that their deformation quantizations are  double Ore extensions. That is, a main aim of this article is to prove that algebras in a class of double Ore extensions defined by James Zhang and Jun Zhang \cite{ZhZh} are deformation quantizations of certain Poisson algebras called Poisson double extensions and to characterize their Poisson structures. On the other side, the second author introduced the Poisson enveloping algebras in \cite{Oh5} which are closely related to Poisson algebras. In fact, for a polynomial Poisson algebra of $n$-variables, Zhu and Wang recently \cite{Zhu18} proved that its Poisson enveloping algebra can be characterized as a deformation quantization of a new polynomial Poisson algebra of $2n$-variables.  Moreover, for Poisson enveloping algebras, L$\ddot{\text{u}}$, Wang and  Zhuang  recently proved in \cite[Theorem 0.1]{LuWaZh}  that the Poisson enveloping algebra of a Poisson polynomial extension over a Poisson algebra $R$ is a double Ore extension of the Poisson enveloping algebra of $R$. Combining these results, one can convince oneself that the relationship (i.e., double Ore extension) between the two enveloping algebras should be  deformation quantization of a special Poisson extension (i.e., Poisson double extension).
By the way, L$\ddot{\text{u}}$, Wang, Yu and the second author considered the Poisson enveloping algebras of Poisson double extensions \cite{LuOhWaYu}. They proved that the Poisson enveloping algebra of a Poisson double extension  over a Poisson algebra $R$ is an iterated double Ore extension of the Poisson enveloping algebra of $R$.

Deformation quantization has been developed by many mathematicians, for instance,  Kontsevich \cite{Kon}. In this field, Poisson structures play an important role.  For example,
Bitoun \cite{Bit} and Van den Bergh \cite{Ber} studied algebraic properties of a filtered algebra  by constructing a  Poisson algebra induced from its filtration. In \cite{Ber1},
Van den Bergh computed the Hochschild homology of a class of Artin-Schelter regular algebras of dimension 3 by using the Poisson homology of the associated Poisson algebras.
In these cases, Poisson algebras induced from the given algebras can be used to detect many algebraic and homological properties of the given algebras. A secondary aim of this article is to give an algorithm to construct Poisson algebras induced from noncommutative algebras
 by reformulating their noncommutative relations. In fact, we  construct a class of Poisson algebras called Poisson double extensions induced from a class of double Ore extensions by interpreting the noncommutative relations in terms of Poisson brackets.
 As mentioned in \cite[\S1]{ChOh3}, let $h$ be a nonzero, nonunit, non-zero-divisor and central element of an algebra $R$ such that $R/hR$ is commutative.
Then $R/hR$ becomes a Poisson algebra with Poisson bracket
\begin{equation}\label{SCL}
\{\overline{a},\overline{b}\}=\overline{h^{-1}(ab-ba)}
\end{equation} for all
$\overline{a}=a+hR,\overline{b}=b+hR\in R/hR$. Following \cite[Chapter III.5]{BrGo},  the Poisson algebra $R/hR$ is called a {\it semiclassical limit} of $R$ and  $R$ is called a {\it quantization} of the Poisson algebra $R/hR$.
By a {\it deformation} of $R/hR$, we mean any ${\bf k}$-algebra of the form $R/(h-\lambda)R$, where $0\neq\lambda\in{\bf k}$ is such that the central element $h-\lambda$ is a nonunit in $R$. Here, we construct a class of Poisson algebras such that double Ore extensions are their deformation quantizations, and investigate their modular derivations and the relationship between   iterated Poisson polynomial extensions and them.

Let $t$ be an indeterminate and let ${\bf k}$ be a field of characteristic zero. In section 2, we consider a 5-tuple $(\Lambda,\Bbb F, R, A, t-1)$, where $\Lambda$ is a nonempty subset of ${\bf k}\setminus\{0,1\}$,  $\Bbb F$ is a subring  of the field ${\bf k}(t)$ containing ${\bf k}[t,t^{-1}]$, $t-1\in\Bbb F$, $R$ is an $\Bbb F$-algebra, $A$ is a right double Ore extension of $R$, such that $R$ and $A$ have  semiclassical limits $R_1:=R/(t-1)R$ and  $A_1:=A/(t-1)A$, respectively,  that $A_\lambda$, $\lambda\in\Lambda$, is a right double Ore extension of $R_\lambda$, and that $R_\lambda$ and $A_\lambda$ are  deformations of $R_1$ and $A_1$, respectively. (See Theorem~\ref{DPOE}.)
$$
\xymatrix{
  & &R\subset A \ar@{~>}[dll] \ar[drr]^-{\txt{\qquad semiclassical limit}}\\
 R_\lambda \subset A_\lambda \ar@{<--}[rrrr]^(.5){\txt{deformation}}& & & & R_1\subset A_1}
$$

We also analyze the Poisson bracket of $A_1$ and give a definition of Poisson double extension.
It is observed that the Poisson double extension
 may be considered as an analogue of double Ore extension and as a generalization of the Poisson polynomial extension in \cite{Oh8}. (See Theorem~\ref{LOW1}.)
 In section 3, we show that Poisson double  extensions preserve the smoothness of Poisson algebras, and give an explicit formula of modular derivations for Poisson double  extensions over affine smooth Poisson algebras with trivial canonical bundles. Section 4 is devoted to find a relationship between Poisson double extensions and iterated Poisson polynomial extensions as Cavalho, Lopes and Matczuk studied  a relationship  between double  extensions and iterated Ore extensions in \cite{CaLoMa}.
 In section 5, we give examples of Poisson double extensions. We find an example of a Poisson double extension that is not an iterated Poisson polynomial extension.
 In particular, we construct Poisson double extensions such that the double Ore extensions in \cite{ZhZh}, which are Artin-Schelter regular algebras,  are their deformation quantizations.

\medskip
Assume throughout the article that ${\bf k}$ denotes a field of characteristic zero, and that all vector spaces are over ${\bf k}$.
A {\it Poisson algebra} is a commutative ${\bf k}$-algebra $A$ with a Poisson bracket, which is a bilinear product
$\{-,-\}:A\times A\rightarrow A$ such that $A$ is a Lie algebra
under $\{-,-\}$, and that $H_a:=\{a,-\}$ is a derivation on $A$ for all $a\in A$.

\section{Double Ore Extensions}
Let us recall a right double Ore extension, shortly a right double extension, of an algebra $R$ defined in \cite[\S1]{ZhZh}. Let $\Bbb F$ be a commutative ${\bf k}$-algebra and $R$ be an $\Bbb F$-algebra. An $\Bbb F$-algebra $A$ containing $R$ as a subalgebra is said to be a {\it right double extension} of $R$ if
$A$ is generated by $R$ and two new variables $y_1,y_2$ such that
\begin{itemize}
\item $y_1$ and $y_2$ satisfy a relation
\begin{equation}\label{REL1}
y_2y_1=p_{11}y_1^2+p_{12}y_1y_2+\tau_1y_1+\tau_2y_2+\tau_0,
\end{equation}
where $P:=\{p_{11},p_{12}\}\subset\Bbb F$ and $\tau:=\{\tau_1,\tau_2, \tau_0\}\subset R$,
\item As a left $R$-module, $A$ is a free left $R$-module with a basis $\{y_1^iy_2^j|i,j\geq0\}$,
\item $y_1R+y_2R+R\subset Ry_1+Ry_2+R$.
\end{itemize}
Hence there exist $\Bbb F$-linear maps $\sigma_{11},\sigma_{12},\sigma_{21},\sigma_{22},\delta_1,\delta_2$ from $R$ to itself such that
\begin{equation}\label{REL2}
\begin{aligned}
y_1a&=\sigma_{11}(a)y_1+\sigma_{12}(a)y_2+\delta_1(a),\\
y_2a&=\sigma_{21}(a)y_1+\sigma_{22}(a)y_2+\delta_2(a)
\end{aligned}
\end{equation}
for all $a\in R$.  Set
$$\begin{aligned}
&y:=\left(\begin{matrix} y_1\\ y_2\end{matrix}\right)\in M_{2\times1}(A),&&\\
&\sigma:R\longrightarrow M_{2\times2}(R),&\ \ \ \sigma(a)&=\left(\begin{matrix}\sigma_{11}(a)&\sigma_{12}(a)\\ \sigma_{21}(a)&\sigma_{22}(a)\end{matrix}\right),\\
&\delta:R\longrightarrow M_{2\times1}(R),&\ \ \ \delta(a)&=\left(\begin{matrix}\delta_{1}(a)\\ \delta_{2}(a)\end{matrix}\right).
\end{aligned}$$
Note that $ M_{2\times1}(A)$, $M_{2\times2}(R)$ and $M_{2\times1}(R)$ are both left and right $R$-modules and that (\ref{REL2}) is expressed explicitly by
$$ya=\sigma(a)y+\delta(a)$$
for all $a\in R$. In this case, we say that the right double extension $A$ of $R$ has a DE-data $\{P,\sigma,\delta,\tau\}$.


\section{Semiclassical limits of double extensions}

We begin with constructing a class of right double extensions $A$ such that $A$ has a semiclassical limit. The following assumptions are modifications of  \cite[Notation 1.1]{Oh12} and \cite[Notation 2.1]{ChOh3}.
\begin{notn}\label{ASSUM}
Let $t$ be an indeterminate. We assume that a 5-tuple $(\Lambda,\Bbb F,R, A, t-1)$ satisfies the following conditions (1)-(6):

(1) The first entry $\Lambda$ is a nonempty subset of the set ${\bf k}\setminus\{0,1\}$.

(2) The second entry
$\Bbb F$ is a subring of the ring of regular functions on $\Lambda\cup\{1\}$ containing ${\bf k}[t,t^{-1}]$, that is,
\begin{equation}\label{REG}
{\bf k}[t,t^{-1}]\subset \Bbb F\subset\{f/g\in{\bf k}(t)| f,g\in{\bf k}[t]\text{ such that } g(1)\neq0,g(\lambda)\neq0\ \forall\lambda\in\Lambda\}.
\end{equation}

(3) The third entry $R$ is a (possibly noncommutative) $\Bbb F$-algebra.

(4) The fourth entry $A$ is  a right double extension of $R$ with two new variables $y_1, y_2$ and  DE-data $\{P,\sigma,\delta,\tau\}$,
such that
\begin{equation}\label{COE}
\begin{aligned}
P&=\{p_{11}(t),p_{12}(t)\}\text{ such that } p_{11}(t), p_{12}(t)-1\in (t-1)\Bbb F,\\
\tau&=\{\tau_1,\tau_2,\tau_0\}\subset (t-1)R,\\
 \sigma(a)-I(a)&=\left(\begin{matrix}\sigma_{11}(a)&\sigma_{12}(a)\\ \sigma_{21}(a)&\sigma_{22}(a)\end{matrix}\right)-
 \left(\begin{matrix}a&0\\ 0&a\end{matrix}\right)\in(t-1)M_{2\times2}(R),\\
 \delta(a)&=\left(\begin{matrix}\delta_{1}(a)\\ \delta_{2}(a)\end{matrix}\right)\in (t-1)M_{2\times1}(R)
\end{aligned}
\end{equation}
for all $a\in R$.

(5) The fifth  entry $t-1$ is a nonzero, nonunit and non-zero-divisor of $A$ such that the factor $R_1:=R/(t-1)R$ is commutative.

(6) For each $\lambda\in\Lambda\cup\{1\}$, set
$$R_\lambda:=R/(t-\lambda)R,\ \ \ A_\lambda:=A/(t-\lambda)A$$
and denote by $\gamma_\lambda:R \to R_\lambda$ and $\gamma^A_\lambda:A \to A_\lambda$ the canonical homomorphisms of ${\bf k}$-algebras. Assume that $A_\lambda$ is a free left $R_\lambda$-module with basis  $\{\gamma^A_\lambda(y_1)^{i}\gamma^A_\lambda(y_2)^{j}|i, j\geq0\}$ for each $\lambda\in\Lambda\cup\{1\}$. For convenience, we will still write
$$y_1 \text{ and $y_2$ for $\gamma^A_\lambda(y_1)$ and $\gamma^A_\lambda(y_2)$, respectively.}$$

\end{notn}


\begin{lem}\label{COMM}
Assume that $(\Lambda,{\Bbb F}, R, A, t-1)$ satisfies Assumption~\ref{ASSUM}.

(1)   For each $\lambda\in\Lambda\cup\{1\}$, $\gamma_\lambda(f(t))=f(\lambda)+(t-\lambda)R$ for all $f(t)\in\Bbb F$, $A_\lambda$ contains $R_\lambda$ as a subalgebra and the restriction map
 of $\gamma^A_\lambda$ to $R$ is equal to $\gamma_\lambda$. (We identify $\gamma_\lambda(f(t))$ with $f(\lambda)\in{\bf k}$.)

(2) $A_1=R_1[y_1,y_2]$, the commutative polynomial ring as a ring.
\end{lem}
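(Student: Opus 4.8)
The plan is to establish the two assertions of part~(1) and then read off part~(2) by reducing the defining relations (\ref{REL1}) and (\ref{REL2}) modulo $t-1$. First I would prove the evaluation identity $\gamma_\lambda(f(t))=f(\lambda)+(t-\lambda)R$. Since $t\in{\bf k}[t,t^{-1}]\subseteq\Bbb F$ acts centrally on $R$ and $(t-\lambda)1_R\in(t-\lambda)R$, the homomorphism $\gamma_\lambda$ sends $t$ to $\lambda$, so $\gamma_\lambda(g(t))=g(\lambda)1_{R_\lambda}$ for every $g\in{\bf k}[t]$. Writing $f=p/q$ with $p,q\in{\bf k}[t]$, the containment (\ref{REG}) lets me choose the denominator with $q(\lambda)\neq0$. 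Applying $\gamma_\lambda$ to the relation $q(t)f(t)=p(t)$ in $\Bbb F$ then gives $q(\lambda)\gamma_\lambda(f(t))=p(\lambda)1_{R_\lambda}$, and as the nonzero scalar $q(\lambda)$ is invertible in the ${\bf k}$-algebra $R_\lambda$, I solve to obtain $\gamma_\lambda(f(t))=f(\lambda)1_{R_\lambda}$. The delicate point is that $q$ cannot be inverted inside $\Bbb F$ (indeed $1/(t-\lambda)\notin\Bbb F$ in general); the inversion is legitimate only after passing to $R_\lambda$.

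Next I would show $\gamma^A_\lambda|_R=\gamma_\lambda$ and that $R_\lambda$ sits inside $A_\lambda$ as a subalgebra; the crux is the identity $R\cap(t-\lambda)A=(t-\lambda)R$. Because $A$ is free as a left $R$-module on $\{y_1^iy_2^j\}$, I split $A=R\oplus R'$ as left $R$-modules, where $R'=\bigoplus_{(i,j)\neq(0,0)}Ry_1^iy_2^j$. Since $t-\lambda$ is central in $A$, multiplication by it preserves each summand, so $(t-\lambda)A=(t-\lambda)R\oplus(t-\lambda)R'$; intersecting with $R$ leaves precisely $(t-\lambda)R$. Hence the inclusion $R\hookrightarrow A$ descends to an injection $R_\lambda\to A_\lambda$ through which $\gamma^A_\lambda|_R$ is identified with $\gamma_\lambda$, exhibiting $R_\lambda$ as a subalgebra of $A_\lambda$. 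Note this uses only the free-module structure and the centrality of $t-\lambda$, not any cancellation property.

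For part~(2) I would apply $\gamma^A_1$ to the relations of $A$. By (\ref{COE}) and the evaluation identity, $p_{11}(1)=0$, $p_{12}(1)=1$, and $\gamma_1(\tau_0)=\gamma_1(\tau_1)=\gamma_1(\tau_2)=0$, while for every $a\in R$ one has $\gamma_1(\sigma_{11}(a))=\gamma_1(\sigma_{22}(a))=\gamma_1(a)$ and $\gamma_1(\sigma_{12}(a))=\gamma_1(\sigma_{21}(a))=\gamma_1(\delta_1(a))=\gamma_1(\delta_2(a))=0$. Reducing (\ref{REL1}) then gives $y_2y_1=y_1y_2$, and reducing (\ref{REL2}) gives $y_1\overline{a}=\overline{a}y_1$ and $y_2\overline{a}=\overline{a}y_2$ for all $\overline{a}\in R_1$. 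Since $R_1$ is commutative by Assumption~\ref{ASSUM}(5) and $A_1$ is generated by $R_1,y_1,y_2$, the algebra $A_1$ is commutative; combined with the free $R_1$-basis $\{y_1^iy_2^j\}$ from Assumption~\ref{ASSUM}(6), this identifies $A_1$ with the commutative polynomial ring $R_1[y_1,y_2]$.

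I expect the main obstacle to be the embedding identity $R\cap(t-\lambda)A=(t-\lambda)R$: the evaluation step is bookkeeping and part~(2) is a direct reduction of the relations, but this intersection is what makes the inclusion $R_\lambda\subseteq A_\lambda$ meaningful and forces the use of the free-module decomposition in place of a nonexistent cancellation property of $t-\lambda$.
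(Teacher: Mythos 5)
Your proposal is correct and follows essentially the same path as the paper's proof: your key identity $R\cap(t-\lambda)A=(t-\lambda)R$ via the free left $R$-module decomposition is exactly the paper's uniqueness-of-coefficients argument in different words, and part (2) is the same reduction of (\ref{REL1}) and (\ref{REL2}) modulo $t-1$ using (\ref{COE}) and Assumption~\ref{ASSUM}(5),(6). The only (minor) divergence is the evaluation identity, where you clear denominators and invert the nonzero scalar $q(\lambda)$ inside $R_\lambda$, whereas the paper writes $f(t)=f(\lambda)+(t-\lambda)h(t)$ with $h(t)\in\Bbb F$; your variant avoids having to justify that the difference quotient $h(t)$ actually lies in $\Bbb F$, a point the paper passes over silently.
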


\begin{proof}
(1)  Note that, for any $f(t)\in\Bbb F$, $f(\lambda)$ is a well-defined element of ${\bf k}$ by (\ref{REG}) and that $f(t)$ may be written by $f(t)=f(\lambda)+(t-\lambda)h(t)$ for some $h(t)\in\Bbb F$. Hence $\gamma_\lambda(f(t))=f(\lambda)+(t-\lambda)R$.

Let $a\in (t-\lambda)A\cap R$. Then $a=(t-\lambda)x$ for some $x\in A$ and $x$ is expressed uniquely as $x=\sum_{i,j}b_{ij}y_1^iy_2^j$, $b_{ij}\in R$, by  Assumption~\ref{ASSUM}(4). Thus  we have $a=\sum_{i,j}(t-\lambda)b_{ij}y_1^iy_2^j\in R$. It follows that
$a=(t-\lambda)b_{00}\in (t-\lambda)R$ by Assumption~\ref{ASSUM}(4). Hence $(t-\lambda)A\cap R=(t-\lambda)R$ and thus
the canonical map
$$R_\lambda\longrightarrow A_\lambda,\ \ \ a+(t-\lambda)R\mapsto a+(t-\lambda)A$$ is injective. It follows that $A_\lambda$ contains $R_\lambda$ as a subalgebra and the restriction map
 of $\gamma^A_\lambda$ to $R$ is equal to $\gamma_\lambda$.

(2) By Assumption~\ref{ASSUM}(5), $R_1$ is commutative and $A_1$  contains $R_1$ as a subring by the above result (1). Moreover, by (\ref{REL1}), (\ref{REL2}) and  (\ref{COE}),
\begin{eqnarray}
y_2y_1-y_1y_2=p_{11}(t)y_1^2+(p_{12}(t)-1)y_1y_2+\tau_1y_1+\tau_2y_2+\tau_0\in (t-1)A,\label{COMM1}\\
y_1a-ay_1=(\sigma_{11}-I)(a)y_1+\sigma_{12}(a)y_2+\delta_1(a)\in(t-1)A,\label{COMM2}\\
y_2a-ay_2=\sigma_{21}(a)y_1+(\sigma_{22}-I)(a)y_2+\delta_2(a)\in(t-1)A\label{COMM3}
\end{eqnarray}
for all $a\in R$, where $I(a)=a$. Hence
$A_1$ is commutative and thus $A_1$ is the commutative ring $R_1[y_1,y_2]$ by Assumption~\ref{ASSUM}(6).
\end{proof}

By Assumption~\ref{ASSUM}(5) and Lemma~\ref{COMM}(2), $A_1$ is a Poisson {\bf k}-algebra and hence is a semiclassical limit of $A$.
For convenience, we will write $\overline{x}$ for $\gamma_1^A(x)=x+(t-1)A\in A_1$ and $\overline{a}$ for $\gamma_1(a)=a+(t-1)R\in R_1$.

\begin{thm}\label{DPOE}
Assume that $(\Lambda,{\Bbb F}, R, A, t-1)$ satisfies Assumption~\ref{ASSUM}.

(1) For each $\lambda\in\Lambda$, the deformation $A_\lambda$ of $A_1$ is a right double extension of $R_\lambda$ with two variables $y_1$, $y_2$ and DE-data $\{P_\lambda,\sigma_\lambda,\delta_\lambda,\tau_\lambda\}$, where
$$\begin{aligned}
P_\lambda&=\gamma_\lambda(P)=\{p_{11}(\lambda),p_{12}(\lambda)\}\subset {\bf k}, \\
\sigma_\lambda&=\gamma_\lambda\sigma\gamma_\lambda^{-1}=\left(\begin{matrix}\gamma_\lambda\sigma_{11}\gamma_\lambda^{-1} &\gamma_\lambda\sigma_{12}\gamma_\lambda^{-1}\\
\gamma_\lambda\sigma_{21}\gamma_\lambda^{-1} &\gamma_\lambda\sigma_{22}\gamma_\lambda^{-1}\end{matrix}\right),\\
\delta_\lambda&=\gamma_\lambda\delta\gamma_\lambda^{-1}=\left(\begin{matrix}\gamma_\lambda\delta_1\gamma_\lambda^{-1}\\ \gamma_\lambda\delta_2\gamma_\lambda^{-1}\end{matrix}\right),\\
\tau_\lambda&=\gamma_\lambda(\tau)=\{\gamma_\lambda(\tau_1), \gamma_\lambda(\tau_2), \gamma_\lambda(\tau_0)\}\subset R_\lambda.
\end{aligned}$$

(2) The semiclassical limit $A_1=R_1[y_1,y_2]$ is a Poisson ${\bf k}$-algebra containing $R_1$ as a Poisson subalgebra with Poisson bracket
\begin{equation}\label{PB}
\begin{aligned}
\{y_2,y_1\}&=q_{11}y_1^2+q_{12}y_1y_2+w_1y_1+w_2y_2+w_0,\\
\{y_1,a\}&=\alpha_{11}(a)y_1+\alpha_{12}(a)y_2+\nu_1(a),\\
\{y_2,a\}&=\alpha_{21}(a)y_1+\alpha_{22}(a)y_2+\nu_2(a)
\end{aligned}
\end{equation}
for $a\in R_1$,
where
\begin{equation}\label{PDE1}
\begin{aligned}
q_{11}&=\overline{(t-1)^{-1}p_{11}(t)},&\  q_{12}&=\overline{(t-1)^{-1}(p_{12}(t)-1)},&&\\
w_1&=\overline{(t-1)^{-1}\tau_1},&\  w_2&=\overline{(t-1)^{-1}\tau_2},&\  w_0&=\overline{(t-1)^{-1}\tau_0},
\end{aligned}
\end{equation}
\begin{equation}\label{PDE2}
\begin{aligned}
\alpha_{11}&=\gamma_1[(t-1)^{-1}(\sigma_{11}-I)]\gamma_1^{-1}, & \alpha_{12}&=\gamma_1[(t-1)^{-1}\sigma_{12}]\gamma_1^{-1},\\
\alpha_{21}&=\gamma_1[(t-1)^{-1}\sigma_{21}]\gamma_1^{-1}, & \alpha_{22}&=\gamma_1[(t-1)^{-1}(\sigma_{22}-I)]\gamma_1^{-1},
\end{aligned}
\end{equation}
\begin{equation}\label{PDE3}
\begin{aligned}
\nu_1&=\gamma_1[(t-1)^{-1}\delta_1]\gamma_1^{-1}, & \nu_2&=\gamma_1[(t-1)^{-1}\delta_2]\gamma_1^{-1}.
\end{aligned}
\end{equation}
\end{thm}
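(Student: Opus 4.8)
For the final statement (Theorem~\ref{DPOE}), the plan is to obtain both assertions by pushing the defining relations of $A$ through the canonical homomorphisms $\gamma_\lambda^A$ and $\gamma_1^A$; the only genuine content is checking that the various specialized maps are well defined, after which everything reduces to Lemma~\ref{COMM} together with the commutator identities \eqref{COMM1}--\eqref{COMM3}.

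For part (1), I would first verify that the component maps $\gamma_\lambda\sigma_{ij}\gamma_\lambda^{-1}$ and $\gamma_\lambda\delta_i\gamma_\lambda^{-1}$ defining $\sigma_\lambda$ and $\delta_\lambda$ are well-defined ${\bf k}$-linear endomorphisms of $R_\lambda$. Since each $\sigma_{ij}$ and each $\delta_i$ is $\Bbb F$-linear and $t-\lambda\in\Bbb F$, we have $\sigma_{ij}((t-\lambda)R)\subseteq(t-\lambda)R$ and $\delta_i((t-\lambda)R)\subseteq(t-\lambda)R$, so each map descends to $R_\lambda=R/(t-\lambda)R$ independently of the chosen preimage. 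Next I would apply $\gamma_\lambda^A$ to \eqref{REL1} and \eqref{REL2}. By Lemma~\ref{COMM}(1) the restriction of $\gamma_\lambda^A$ to $R$ equals $\gamma_\lambda$ and $\gamma_\lambda^A(f(t))=f(\lambda)$ for $f(t)\in\Bbb F$, so these relations specialize exactly to the double extension relations for $A_\lambda$ with coefficients $P_\lambda=\{p_{11}(\lambda),p_{12}(\lambda)\}$, maps $\sigma_\lambda,\delta_\lambda$, and $\tau_\lambda=\gamma_\lambda(\tau)$. Since $\gamma_\lambda^A$ is surjective, $A_\lambda$ is generated by $R_\lambda$ and $y_1,y_2$; the freeness of $A_\lambda$ as a left $R_\lambda$-module with basis $\{y_1^iy_2^j\}$ is precisely Assumption~\ref{ASSUM}(6), and the specialized form of \eqref{REL2} yields $y_1R_\lambda+y_2R_\lambda+R_\lambda\subseteq R_\lambda y_1+R_\lambda y_2+R_\lambda$. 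All axioms of a left double extension are thereby checked, so $A_\lambda$ has DE-data $\{P_\lambda,\sigma_\lambda,\delta_\lambda,\tau_\lambda\}$.

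For part (2), recall from the remark preceding the theorem that $A_1$ is already a Poisson algebra via \eqref{SCL}, so I need only compute the bracket on the generators and confirm that the structure maps are well defined; because the Poisson bracket is a biderivation, its values on $R_1$, $y_1$ and $y_2$ determine it on all of $A_1=R_1[y_1,y_2]$, and no separate verification of the Jacobi identity or the Leibniz rule is needed. That $R_1$ is a Poisson subalgebra follows because commutators of elements of $R$ computed in $A$ coincide with those computed in $R$, so the bracket \eqref{SCL} on $R_1$ agrees with its restriction from $A_1$. To read off \eqref{PB} I use that $t-1$ is a non-zero-divisor: by \eqref{COMM1} the commutator $y_2y_1-y_1y_2$ lies in $(t-1)A$, and dividing by $t-1$ and reducing modulo $(t-1)A$ gives $\{y_2,y_1\}=\overline{(t-1)^{-1}(y_2y_1-y_1y_2)}$, which is exactly $q_{11}y_1^2+q_{12}y_1y_2+w_1y_1+w_2y_2+w_0$ with the constants \eqref{PDE1}; similarly \eqref{COMM2} and \eqref{COMM3} place $y_1a-ay_1$ and $y_2a-ay_2$ in $(t-1)A$, and the same procedure yields the last two lines of \eqref{PB} with $\alpha_{ij},\nu_i$ as in \eqref{PDE2}--\eqref{PDE3}. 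Finally I would check that these $\alpha_{ij},\nu_i$ descend to $R_1$: for $r\in R$ the relation $(t-1)^{-1}(\sigma_{11}-I)((t-1)r)=(\sigma_{11}-I)(r)\in(t-1)R=\ker\gamma_1$ shows that $\gamma_1[(t-1)^{-1}(\sigma_{11}-I)]$ annihilates $(t-1)R$ and hence factors through $\gamma_1$, and the remaining maps are handled identically.

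The routine relation-chasing is harmless; the step that carries the weight is the well-definedness of the specialized and limiting maps, which is exactly where the hypotheses \eqref{COE}---that $p_{11}(t)$, $p_{12}(t)-1$, the elements of $\tau$, the entries of $\sigma-I$, and the entries of $\delta$ all lie in $(t-1)$ times the ambient module, together with $\Bbb F$-linearity and $t-\lambda\in\Bbb F$---are consumed. I expect the only delicate bookkeeping to be the correct reading of the symbol $(t-1)^{-1}$, which is \emph{not} an element of $A$: it denotes the unique preimage, under multiplication by the non-zero-divisor $t-1$, of an element already known to lie in $(t-1)A$, and tracking this while commuting $\gamma_1^A$ past products of $y_1,y_2$ and elements of $R$ is the only point requiring care.
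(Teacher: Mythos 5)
Your proof is correct and follows essentially the same route as the paper's own (very terse) proof: establish well-definedness of the specialized maps via $\Bbb F$-linearity and the hypotheses \eqref{COE}, then push the relations \eqref{REL1}--\eqref{REL2} through $\gamma_\lambda^A$ (using Lemma~\ref{COMM}(1) and Assumption~\ref{ASSUM}(4),(6)) for part (1), and read off the bracket from \eqref{COMM1}--\eqref{COMM3} via the semiclassical limit construction for part (2). You have simply written out in full the details the paper dismisses with ``the result follows'' and ``it is checked clearly,'' including the correct handling of $(t-1)^{-1}$ as division by a non-zero-divisor rather than an element of $A$.
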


$$
\xymatrix{
  & &R\subset A \ar@{~>}[dll] \ar[drr]^-{\txt{\qquad semiclassical limit}}\\
 R_\lambda \subset A_\lambda \ar@{<--}[rrrr]^(.45){\txt{deformation}}& & & & R_1\subset A_1=R_1[y_1,y_2]  }
$$
\begin{proof}
(1) Note that all maps $\gamma_\lambda\sigma_{ij}\gamma_\lambda^{-1}$ and $\gamma_\lambda\delta_{i}\gamma_\lambda^{-1}$ ($i,j=1,2$) are well-defined since $\sigma_{ij},\delta_i$ are  $\Bbb F$-linear maps.
The result follows by Assumption~\ref{ASSUM}(4), (6) and Lemma~\ref{COMM}(1).

(2)
Note that $q_{11}, q_{12}$, $w_1, w_2, w_0$, $\alpha_{ij}$ and $\nu_i$ $(i,j=1,2$) are well-defined by (\ref{COE}). The semiclassical limit $A_1$ of $A$ contains  $R_1$ as a Poisson subalgebra by Lemma~\ref{COMM}(2) and it is checked clearly by (\ref{COMM1}), (\ref{COMM2}), (\ref{COMM3}) that the Poisson bracket of $A_1$ satisfies (\ref{PB}).
\end{proof}

\begin{rem}
Here we reformulate the semiclassical limit of $A$ in the case that $A$ is a {\it left} double extension of $R$.

(1) In Assumption~\ref{ASSUM}, let $A$ be a {\it left} double extension of $R$ with DE-data $\{P',\sigma',\delta', \tau'\}$. Then, in order for $A_1$ to be commutative, we assume that  the DE-data $\{P',\sigma',\delta', \tau'\}$ satisfies  the same conditions in (\ref{COE})  as follows:
\begin{equation}\label{COE1}
\begin{aligned}
P'&=\{p'_{11}(t),p'_{12}(t)\}\text{ such that } p'_{11}(t), p'_{12}(t)-1\in (t-1)\Bbb F,\\
\tau'&=\{\tau'_1,\tau'_2,\tau'_0\}\subset (t-1)R,\\
 \sigma'(a)-I(a)&=\left(\begin{matrix}\sigma'_{11}(a)&\sigma'_{12}(a)\\ \sigma'_{21}(a)&\sigma'_{22}(a)\end{matrix}\right)-
 \left(\begin{matrix}a&0\\ 0&a\end{matrix}\right)\in(t-1)M_{2\times2}(R),\\
 \delta'(a)&=\left(\begin{matrix}\delta'_{1}(a)& \delta'_{2}(a)\end{matrix}\right)\in (t-1)M_{1\times2}(R)
\end{aligned}
\end{equation}
for all $a\in R$.

(2)  In the case that $A$ is a {\it left} double extension of $R$ with DE-data $\{P',\sigma',\delta', \tau'\}$ satisfying (\ref{COE1}), the  Poisson bracket of the semiclassical limit $A_1=R_1[y_1,y_2]$  is given as follows:
\begin{equation}\label{PBL11}
\begin{aligned}
\{y_2,y_1\}&=-(q'_{11}y_1^2+q'_{12}y_1y_2+w'_1y_1+w'_2y_2+w'_0),\\
\{y_1,a\}&=-[\alpha'_{11}(a)y_1+\alpha'_{21}(a)y_2+\nu'_1(a)],\\
\{y_2,a\}&=-[\alpha'_{12}(a)y_1+\alpha'_{22}(a)y_2+\nu'_2(a)]
\end{aligned}
\end{equation}
for $a\in R_1$,
where
$$
\begin{aligned}
q'_{11}&=\overline{(t-1)^{-1}p'_{11}(t)},&\  q'_{12}&=\overline{(t-1)^{-1}(p'_{12}(t)-1)},&&\\
w'_1&=\overline{(t-1)^{-1}\tau'_1},&\  w'_2&=\overline{(t-1)^{-1}\tau'_2},&\  w'_0&=\overline{(t-1)^{-1}\tau'_0},
\end{aligned}
$$
$$
\begin{aligned}
\alpha'_{11}&=\gamma_1[(t-1)^{-1}(\sigma'_{11}-I)]\gamma_1^{-1}, & \alpha'_{12}&=\gamma_1[(t-1)^{-1}\sigma'_{12}]\gamma_1^{-1},\\
\alpha'_{21}&=\gamma_1[(t-1)^{-1}\sigma'_{21}]\gamma_1^{-1}, & \alpha'_{22}&=\gamma_1[(t-1)^{-1}(\sigma'_{22}-I)]\gamma_1^{-1},
\end{aligned}
$$
$$
\begin{aligned}
\nu'_1&=\gamma_1[(t-1)^{-1}\delta'_1]\gamma_1^{-1}, & \nu'_2&=\gamma_1[(t-1)^{-1}\delta'_2]\gamma_1^{-1}.
\end{aligned}
$$

(3) Suppose that $A$ is a  double extension of $R$ with {\it right} DE-data $\{P,\sigma,\delta, \tau\}$ and {\it left} DE-data $\{P',\sigma',\delta', \tau'\}$  respectively. Then the {right} DE-data $\{P,\sigma,\delta, \tau\}$ satisfies (\ref{COE}) if and only if the { left} DE-data $\{P',\sigma',\delta', \tau'\}$ satisfies (\ref{COE1}). Moreover, if $A$ satisfies (\ref{COE}) or (\ref{COE1}) then the semiclassical limit induced by the right DE-data $\{P,\sigma,\delta, \tau\}$ is equal to that induced by the left DE-data  $\{P',\sigma',\delta', \tau'\}$. In fact, in (\ref{PB}) and (\ref{PBL11}),
\begin{equation}\label{relation-l-r}
q_{1i}=-q'_{1i},  \alpha_{ij}+\alpha'_{ji}=0, \nu_i=-\nu'_i, w_i=-w'_i,
\end{equation}
for all $i, j\in\{1,2\}$. By equation~\eqref{relation-l-r}, one can also obtain the corresponding version of the upcoming Lemma \ref{map} and Proposition    \ref{Poisson structure}  for {\it PDE-data}  $\{q',\alpha',\nu',w'\}$,   which is a semiclassical limit of left double extension.
\end{rem}

\begin{proof}
(2) Observe that
\begin{eqnarray}
y_2y_1-y_1y_2=-[p'_{11}(t)y_1^2+(p'_{12}(t)-1)y_2y_1+y_1\tau'_1+y_2\tau'_2+\tau'_0]\in (t-1)A,\label{COMM4}\\
y_1a-ay_1=-[y_1(\sigma'_{11}-I)(a)+y_2\sigma'_{21}(a)+\delta'_1(a)]\in(t-1)A,\label{COMM5}\\
y_2a-ay_2=-[y_1\sigma'_{12}(a)+y_2(\sigma'_{22}-I)(a)+\delta'_2(a)]\in(t-1)A\label{COMM6}
\end{eqnarray}
for all $a\in R$, where $I(a)=a$. Hence
$A_1$ is commutative, and $A_1$ is the commutative ring $R_1[y_1,y_2]$ by (\ref{COE1}).
Moreover the Poisson bracket (\ref{PBL11}) in $A_1$ is obtained  by (\ref{COMM4}),  (\ref{COMM5}), (\ref{COMM6})
since $A_1$ is commutative.

(3)
If the {right} DE-data $\{P,\sigma,\delta, \tau\}$ satisfies (\ref{COE}) then $A_1$ is commutative. It follows that
the { left} DE-data $\{P',\sigma',\delta', \tau'\}$ satisfies (\ref{COE1}) by
 (\ref{COMM4}),  (\ref{COMM5}), (\ref{COMM6}), since $A$ is a free right $R$-module with basis $\{y_2^iy_1^j| i,j\ge0\}$. The converse is proved similarly. The remaining results follow immediately by (\ref{PB}) and (\ref{PBL11}).
\end{proof}


Retain the notations in Theorem~\ref{DPOE}. Note that $R_1$ and $A_1=R_1[y_1,y_2]$ are Poisson algebras. Set
 $$\begin{aligned}
\left\{a,\left(\begin{matrix}b_{11}&b_{12}\\ b_{21}&b_{22}\end{matrix}\right)\right\}&:=
 \left(\begin{matrix}\{a, b_{11}\}&\{a, b_{12}\}\\ \{a, b_{21}\}&\{a,b_{22}\}\end{matrix}\right), &\left\{\left(\begin{matrix}b_{11}&b_{12}\\ b_{21}&b_{22}\end{matrix}\right),a\right\} &:=-\left\{a,\left(\begin{matrix}b_{11}&b_{12}\\ b_{21}&b_{22}\end{matrix}\right)\right\}\\
\left\{a,\left(\begin{matrix} z_1\\ z_2\end{matrix}\right)\right\}&:=
\left(\begin{matrix} \{a,z_1\}\\ \{a,z_2\}\end{matrix}\right), &\left\{\left(\begin{matrix} z_1\\ z_2\end{matrix}\right),a\right\}&:=-\left\{a,\left(\begin{matrix} z_1\\ z_2\end{matrix}\right)\right\}\\
\end{aligned}$$
for $a\in R_1$, $\left(\begin{matrix}b_{11}&b_{12}\\ b_{21}&b_{22}\end{matrix}\right)\in M_{2\times2}(R_1)$ and $\left(\begin{matrix} z_1\\ z_2\end{matrix}\right)\in M_{2\times1}(R_1[y_1,y_2])$.
  Define  ${\bf k}$-linear maps $\alpha$, $\nu$ by
$$\begin{aligned}
&\alpha:R_1\longrightarrow M_{2\times2}(R_1),&\ \ \ \alpha(a)&:=\left(\begin{matrix}\alpha_{11}(a)&\alpha_{12}(a)\\ \alpha_{21}(a)&\alpha_{22}(a)\end{matrix}\right),\\
&\nu:R_1\longrightarrow M_{2\times1}(R_1),&\ \ \ \nu(a)&:=\left(\begin{matrix}\nu_1(a)\\ \nu_2(a)\end{matrix}\right)
\end{aligned}$$ and set $y:=\left(\begin{matrix} y_1\\ y_2\end{matrix}\right)\in M_{2\times1}(R_1[y_1,y_2])$. Then, by (\ref{PB}),
\begin{equation}\label{PBB11}
\begin{aligned}
\{y,a\}&=\left(\begin{matrix} \{y_1,a\}\\ \{y_2,a\}\end{matrix}\right)=\left(\begin{matrix} \alpha_{11}(a)y_1+\alpha_{12}(a)y_2+\nu_1(a)\\ \alpha_{21}(a)y_1+\alpha_{22}(a)y_2+\nu_2(a)\end{matrix}\right)=\alpha(a)y+\nu(a)
\end{aligned}
\end{equation}
for all $a\in R_1$. Set
$$q=\{q_{11}, q_{12}\},\ \ \ w=\{w_1,w_2,w_0\},$$
where $q_{11}$, $q_{12}$, $w_1$, $w_2$, $w_0$ are those given in (\ref{PDE1}). We will call  $\{q,\alpha,\nu,w\}$  a {\it PDE-data} of $A_1=R_1[y_1,y_2]$ as in the double extension.

\begin{lem}\label{map}
Retain the above notations.
Then,  for any $a,b\in R_1$,

(1) $\alpha(ab)=a\alpha(b)+b\alpha(a)$.

(2) $\nu(ab)=a\nu(b)+b\nu(a)$.

(3) $\alpha(\{a,b\})=\{\alpha(a),b\}+\{a,\alpha(b)\}+[\alpha(a),\alpha(b)]$.

(4) $\nu(\{a,b\})=\{\nu(a),b\}+\{a,\nu(b)\}+\alpha(a)\nu(b)-\alpha(b)\nu(a)$.
\end{lem}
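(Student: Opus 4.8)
The plan is to derive all four identities from the two defining axioms of a Poisson algebra---the Leibniz rule and the Jacobi identity---applied to the single column relation (\ref{PBB11}), namely $\{y,a\}=\alpha(a)y+\nu(a)$, and then to read off the matrix and column-vector identities by comparing coefficients. This final comparison is legitimate because $A_1=R_1[y_1,y_2]$ is a free left $R_1$-module on the basis $\{y_1^iy_2^j\}$ by Lemma~\ref{COMM}(2) and Assumption~\ref{ASSUM}(6): in any equality of elements of $M_{2\times1}(A_1)$ whose entries are $R_1$-linear combinations of $1,y_1,y_2$, the coefficient of the degree-one part is a well-defined element of $M_{2\times2}(R_1)$ and the constant part is a well-defined element of $M_{2\times1}(R_1)$. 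Throughout I would use that $R_1$ is a Poisson subalgebra of $A_1$, so that brackets and products of entries of $\alpha(a),\nu(a)$ remain in $R_1$; in particular $[\alpha(a),\alpha(b)]\in M_{2\times2}(R_1)$.

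For (1) and (2) I would compute $\{y,ab\}$ in two ways. On one side (\ref{PBB11}) gives $\{y,ab\}=\alpha(ab)y+\nu(ab)$. On the other side, applying the Leibniz rule componentwise and using that $A_1$ is commutative,
\[
\{y,ab\}=a\{y,b\}+b\{y,a\}=\bigl(a\alpha(b)+b\alpha(a)\bigr)y+\bigl(a\nu(b)+b\nu(a)\bigr).
\]
Matching the degree-one coefficients yields (1) and matching the constant parts yields (2).

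For (3) and (4) I would instead feed the Jacobi identity, in the form that $\ham(y_i)=\{y_i,-\}$ is a derivation of the Lie bracket, into the column vector: componentwise, $\{y,\{a,b\}\}=\{\{y,a\},b\}+\{a,\{y,b\}\}$. Expanding $\{\{y,a\},b\}$ with the derivation property of $\{-,b\}$ and then substituting (\ref{PBB11}) for the inner brackets gives
\[
\{\{y,a\},b\}=\{\alpha(a),b\}y+\alpha(a)\alpha(b)y+\alpha(a)\nu(b)+\{\nu(a),b\},
\]
while the same treatment of $\{a,\{y,b\}\}$, using $\{a,y\}=-\{y,a\}$, produces
\[
\{a,\{y,b\}\}=\{a,\alpha(b)\}y-\alpha(b)\alpha(a)y-\alpha(b)\nu(a)+\{a,\nu(b)\}.
\]
Adding the two, the matrix products combine into the commutator $\alpha(a)\alpha(b)-\alpha(b)\alpha(a)=[\alpha(a),\alpha(b)]$, and comparing with the direct evaluation $\{y,\{a,b\}\}=\alpha(\{a,b\})y+\nu(\{a,b\})$ separates, via freeness, into exactly (3) (degree-one part) and (4) (constant part).

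The only genuine bookkeeping---and the place where a sign slip is easiest---lies in the two displayed expansions for (3) and (4): one must consistently track the right-action convention $\{z,a\}=-\{a,z\}$ for matrices and columns, the noncommutativity of multiplication in $M_{2\times2}(R_1)$ (which is precisely what produces the commutator term in (3)), and the overall minus sign coming from $\{a,y\}=-\{y,a\}$ in the second expansion. Once these are handled carefully, (3) and (4) follow by matching coefficients, and there is no further obstacle: the four identities are purely formal consequences of the Leibniz rule and the Jacobi identity together with the freeness of $A_1$ over $R_1$.
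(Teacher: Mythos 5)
Your proof is correct and takes essentially the same route as the paper's: both derive (1)--(2) by computing $\{y,ab\}$ in two ways via (\ref{PBB11}) and the Leibniz rule, and (3)--(4) by computing $\{y,\{a,b\}\}$ in two ways via (\ref{PBB11}) and the Jacobi identity, then comparing the degree-one and constant coefficients. The only cosmetic difference is that the paper writes the Jacobi identity as $\{y,\{a,b\}\}=\{\{y,a\},b\}-\{\{y,b\},a\}$ where you write $\{\{y,a\},b\}+\{a,\{y,b\}\}$ (the same identity under the sign convention $\{z,a\}=-\{a,z\}$), and you make explicit the freeness of $A_1$ over $R_1$ that the paper leaves implicit in ``comparing the coefficients.''
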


\begin{proof}
 Note that $\{y,ab\}=a\{y,b\}+b\{y,a\}$. By (\ref{PBB11}), we have that
$$\{y,ab\}=\alpha(ab)y+\nu(ab)$$
and
\begin{align*}
\{y,ab\}&=a\{y,b\}+b\{y,a\}\\
&=(a\alpha(b)+b\alpha(a))y+a\nu(b)+b\nu(a).
\end{align*}
Hence (1) and (2) follow by comparing the coefficients.

 Note, by (\ref{PBB11}), that
$$\{\alpha(a)y,b\}=\{\alpha(a),b\}y+\alpha(a)\{y,b\}=(\{\alpha(a),b\}+\alpha(a)\alpha(b))y+\alpha(a)\nu(b)$$ for all $a,b\in R_1$. We have
\begin{align*}
\{y,\{a,b\}\}&=\alpha(\{a,b\})y+\nu(\{a,b\})
\end{align*}
and
\begin{align*}
\{y,\{a,b\}\}&=\{\{y,a\},b\}-\{\{y,b\},a\}\\
&=\{\alpha(a)y+\nu(a),b\}-\{\alpha(b)y+\nu(b),a\}\\
&=(\{\alpha(a),b\}+\{a,\alpha(b)\}+[\alpha(a),\alpha(b)])y\\
&\qquad\qquad+\{\nu(a),b\}+\{a,\nu(b)\}+\alpha(a)\nu(b)-\alpha(b)\nu(a),
\end{align*}
by (\ref{PBB11}) and Jacobi identity of $R_1[y_1,y_2]$.
Hence (3) and (4) follow by comparing the coefficients.
\end{proof}

\begin{lem}\label{map1}
Retain the notations  of Lemma~\ref{map}. Then
$\{y,\{a,b\}\}+\{a,\{b,y\}\}+\{b,\{y,a\}\}=0$ for all $a,b\in R_1$
if and only if the formulas (3) and (4) of Lemma~\ref{map} hold.
\end{lem}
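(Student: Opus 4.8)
The plan is to recognize that the displayed equation is nothing but the Jacobi identity for the triple $(y,a,b)$ in the Poisson-module sense, and that after a single antisymmetry rearrangement it coincides with the one equation already analysed inside the proof of Lemma~\ref{map}. The whole content of the lemma is therefore that the ``comparing coefficients'' step carried out there is in fact an equivalence, not merely an implication. Accordingly, the first thing I would do is put the cyclic sum into ``difference form''. Using only the convention $\{z,a\}=-\{a,z\}$ for the right action together with bilinearity of the module action, one checks that $\{a,\{b,y\}\}=\{\{y,b\},a\}$ and $\{b,\{y,a\}\}=-\{\{y,a\},b\}$, so that $\{y,\{a,b\}\}+\{a,\{b,y\}\}+\{b,\{y,a\}\}=0$ is \emph{unconditionally} equivalent to
\[
\{y,\{a,b\}\}=\{\{y,a\},b\}-\{\{y,b\},a\}
\]
for all $a,b\in R_1$; this rearrangement uses no information about $\alpha$ or $\nu$.

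Next I would expand both sides of this last equation exactly as in the proof of Lemma~\ref{map}, applying (\ref{PBB11}) and the Leibniz rule to normalise each side as $Xy+Z$ with $X\in M_{2\times2}(R_1)$ and $Z\in M_{2\times1}(R_1)$. The left-hand side is $\alpha(\{a,b\})y+\nu(\{a,b\})$, while the right-hand side, using the identity $\{\alpha(a)y,b\}=(\{\alpha(a),b\}+\alpha(a)\alpha(b))y+\alpha(a)\nu(b)$ already recorded in that proof, becomes
\[
\big(\{\alpha(a),b\}+\{a,\alpha(b)\}+[\alpha(a),\alpha(b)]\big)y+\big(\{\nu(a),b\}+\{a,\nu(b)\}+\alpha(a)\nu(b)-\alpha(b)\nu(a)\big).
\]

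The crux is then to pass from the single identity $Xy+Z=X'y+Z'$ in $M_{2\times1}(R_1[y_1,y_2])$ to the pair of coefficient equations $X=X'$ and $Z=Z'$, and to observe that this passage is reversible. This is where I would invoke the freeness of $A_1=R_1[y_1,y_2]$ as a left $R_1$-module (Lemma~\ref{COMM}(2) and Assumption~\ref{ASSUM}(6)): since $1,y_1,y_2$ lie in a free $R_1$-basis, an expression $x_1y_1+x_2y_2+z$ with $x_1,x_2,z\in R_1$ vanishes if and only if $x_1=x_2=z=0$, and applying this in each of the two rows gives $Xy+Z=X'y+Z'$ iff $X=X'$ and $Z=Z'$. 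Equality of the $M_{2\times2}(R_1)$-coefficients is precisely formula (3) of Lemma~\ref{map}, and equality of the $M_{2\times1}(R_1)$-coefficients is precisely formula (4). Hence the displayed Jacobi identity holds if and only if both (3) and (4) hold.

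As for difficulty, there is no genuinely hard computation: the expansions of the two sides appear verbatim in the proof of Lemma~\ref{map}, where the forward direction (Jacobi $\Rightarrow$ (3),(4)) was obtained by comparing coefficients. The two points that must be handled with care are the sign bookkeeping in rewriting the cyclic sum as a difference (the right action carries a minus sign), and the \emph{explicit} appeal to $R_1$-freeness that legitimises the coefficient comparison in both directions. It is exactly this freeness that supplies the ``if'' direction, since without a free basis two equal module elements need not have equal coefficients.
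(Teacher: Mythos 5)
Your proof is correct and follows essentially the same route as the paper: the paper's proof of this lemma simply points back to the computation in the proof of Lemma~\ref{map}(3),(4), which is exactly the expansion and coefficient comparison you carry out. Your additions---the explicit sign bookkeeping rewriting the cyclic sum as $\{y,\{a,b\}\}=\{\{y,a\},b\}-\{\{y,b\},a\}$, and the appeal to freeness of $R_1[y_1,y_2]$ over $R_1$ to make ``comparing coefficients'' a genuine equivalence---are precisely the details the paper leaves implicit.
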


\begin{proof}
This has been proved in the proof of Lemma~\ref{map}(3) and (4) already.
\end{proof}

Let us summarize  some properties of the PDE-data $\{q,\alpha,\nu,w\}$ of $A_1=R_1[y_1,y_2]$.

\begin{prop}\label{Poisson structure}
The PDE-data $\{q,\alpha,\nu,w\}$ of $A_1=R_1[y_1,y_2]$ satisfies the following conditions (1)-(13). For all $a, b\in R_1$,
\begin{enumerate}
\item  $\alpha_{11},\alpha_{12},\alpha_{21},\alpha_{22}, \nu_1,\nu_2$ are all derivations.
\item $\alpha_{11} ( \{a,b\}) -\{ \alpha_{11}(a),b\} -\{a,\alpha_{11}(b)\}= \alpha_{12}(a) \alpha_{21}(b)-\alpha_{21}(a) \alpha_{12}(b)$.
\item $\alpha_{22} ( \{a,b\}) -\{ \alpha_{22}(a),b\} -\{a,\alpha_{22}(b)\}= \alpha_{21}(a) \alpha_{12}(b)-\alpha_{12}(a) \alpha_{21}(b)$.
\item $\alpha_{12} ( \{a,b\}) -\{ \alpha_{12}(a),b\} -\{a,\alpha_{12}(b)\}= \alpha_{11}(a) \alpha_{12}(b)-\alpha_{12}(a) \alpha_{11}(b) +  \alpha_{12}(a) \alpha_{22}(b)-\alpha_{22}(a) \alpha_{12}(b)$.
\item $\alpha_{21} ( \{a,b\}) -\{ \alpha_{21}(a),b\} -\{a,\alpha_{21}(b)\}= \alpha_{21}(a) \alpha_{11}(b)-\alpha_{11}(a) \alpha_{21}(b) +  \alpha_{22}(a) \alpha_{21}(b)-\alpha_{21}(a) \alpha_{22}(b)$.
\item $\nu_1 ( \{a,b\}) -\{ \nu_1(a),b\} -\{a,\nu_1(b)\}= \alpha_{11}(a) \nu_1(b)-\nu_1(a) \alpha_{11}(b)+\alpha_{12}(a) \nu_2(b)-\nu_2(a) \alpha_{12}(b)$.
\item $\nu_2 ( \{a,b\}) -\{ \nu_2(a),b\} -\{a,\nu_2(b)\}= \alpha_{21}(a) \nu_1(b)-\nu_1(a) \alpha_{21}(b)+\alpha_{22}(a) \nu_2(b)-\nu_2(a) \alpha_{22}(b)$.
\item $[\alpha_{21},\alpha_{11}]=q_{11}\alpha_{11}+q_{12}\alpha_{21}-q_{11}\alpha_{22}$.
\item $[\alpha_{22},\alpha_{11}]+[\alpha_{21},\alpha_{12}]=2q_{11}\alpha_{12}$.
\item $[\alpha_{22},\alpha_{12}]=q_{12}\alpha_{12}$.
\item $[\nu_2,\alpha_{11}]+[\alpha_{21},\nu_1]=2q_{11}\nu_1+q_{12}\nu_2+w_2 \alpha_{21}-w_1 \alpha_{22}+\{w_1,-\}$.
\item $[\nu_2,\alpha_{12}]+[\alpha_{22},\nu_1]=q_{12}\nu_1 +w_1 \alpha_{12}-w_2 \alpha_{11} + \{w_2,-\}$.
\item $[\nu_2,\nu_1]=w_1 \nu_1 +w_2 \nu_2-w_0\alpha_{11}-w_0 \alpha_{22}+\{w_0,-\}$.
\end{enumerate}
\end{prop}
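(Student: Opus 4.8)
The plan is to read off all thirteen relations from the two Poisson axioms for the bracket (\ref{PB}) on $A_1=R_1[y_1,y_2]$ — the Leibniz rule and the Jacobi identity — by expanding the relevant brackets via (\ref{PB}) and comparing coefficients in the free left $R_1$-module $A_1$ with basis $\{y_1^iy_2^j\}$ (Assumption~\ref{ASSUM}(6) and Lemma~\ref{COMM}(2) guarantee freeness and that $A_1$ is the commutative polynomial ring). The relations fall into three groups. Item (1) is immediate from Lemma~\ref{map}(1),(2): the identities $\alpha(ab)=a\alpha(b)+b\alpha(a)$ and $\nu(ab)=a\nu(b)+b\nu(a)$, read entrywise, are exactly $\alpha_{ij}(ab)=a\alpha_{ij}(b)+b\alpha_{ij}(a)$ and $\nu_i(ab)=a\nu_i(b)+b\nu_i(a)$, so each $\alpha_{ij}$ and each $\nu_i$ is a derivation of $R_1$ (they are already ${\bf k}$-linear by (\ref{PDE2}), (\ref{PDE3})).

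Items (2)--(7) come from Lemma~\ref{map}(3),(4). I would write the matrix identity (3), namely $\alpha(\{a,b\})=\{\alpha(a),b\}+\{a,\alpha(b)\}+[\alpha(a),\alpha(b)]$, one entry at a time. Using commutativity of $R_1$ to simplify the matrix commutator $[\alpha(a),\alpha(b)]$, the $(1,1)$ and $(2,2)$ entries yield (2) and (3), while the $(1,2)$ and $(2,1)$ entries yield (4) and (5). Reading the column identity (4) of Lemma~\ref{map}, namely $\nu(\{a,b\})=\{\nu(a),b\}+\{a,\nu(b)\}+\alpha(a)\nu(b)-\alpha(b)\nu(a)$, componentwise gives (6) and (7).

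Items (8)--(13) all come from a single application of the Jacobi identity to the triple $(y_2,y_1,a)$ with $a\in R_1$,
\[
\{\{y_2,y_1\},a\}+\{\{y_1,a\},y_2\}+\{\{a,y_2\},y_1\}=0 .
\]
I would expand each of the three double brackets using (\ref{PB}) and the Leibniz rule, then normalize every term into the ordered basis $1,y_1,y_2,y_1^2,y_1y_2,y_2^2$ of $A_1$ over $R_1$, repeatedly invoking $y_2y_1=y_1y_2$, antisymmetry $\{y_1,y_2\}=-\{y_2,y_1\}$, and $\{z,a\}=-\{a,z\}$. Comparing coefficients then produces the six remaining relations: the coefficient of $y_1^2$ gives (8), of $y_1y_2$ gives (9), of $y_2^2$ gives (10), of $y_1$ gives (11), of $y_2$ gives (12), and the constant term gives (13).

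The main obstacle is purely computational and confined to the third group: that single Jacobi identity generates a large number of terms, and the bookkeeping of signs (from antisymmetry and from $\{y_1,y_2\}=-\{y_2,y_1\}$), together with the rewriting of composite maps such as $\alpha_{21}\alpha_{11}$, $\nu_2\alpha_{11}$ and $\nu_2\nu_1$ into the commutators $[\alpha_{21},\alpha_{11}]$, $[\nu_2,\alpha_{11}]$, $[\nu_2,\nu_1]$ and the inner-derivation terms $\{w_i,-\}$ on the right-hand sides, must be carried out with care. No conceptual difficulty arises beyond this: freeness of $A_1$ as a left $R_1$-module makes the coefficient comparison legitimate, and since $A_1$ is a semiclassical limit it is already a Poisson algebra, so the Leibniz rule and Jacobi identity are available without further verification.
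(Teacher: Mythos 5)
Your proposal is correct and follows essentially the same route as the paper: item (1) from the entrywise reading of Lemma~\ref{map}(1),(2), items (2)--(7) from the entrywise reading of Lemma~\ref{map}(3),(4) (using commutativity of $R_1$ to reduce the matrix commutator), and items (8)--(13) from the single Jacobi identity on $(y_2,y_1,a)$ with coefficient comparison in the free $R_1$-basis $\{y_1^iy_2^j\}$, with exactly the same coefficient-to-item correspondence ($y_1^2\mapsto(8)$, $y_1y_2\mapsto(9)$, $y_2^2\mapsto(10)$, $y_1\mapsto(11)$, $y_2\mapsto(12)$, constant $\mapsto(13)$) that the paper obtains.
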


\begin{proof}

(1) Note that  $\alpha_{11},\alpha_{12},\alpha_{21},\alpha_{22}$ are all derivations if and only if $\alpha(ab)=a\alpha(b)+b\alpha(a)$ for all $a,b\in R_1$
and that $\nu_1$ and $\nu_2$ are derivations if and only if $\nu(ab)=a\nu(b)+b\nu(a)$ for all $a,b\in R_1$. Hence it follows by Lemma~\ref{map}(1) and (2).

Conditions (2)-(5) follow immediately from  the  fact that they are equivalent to  Lemma~\ref{map}(3).

Conditions (6)-(7) follow immediately from the fact that they are equivalent to  Lemma~\ref{map}(4).

Finally, conditions (8)-(13) are proved by the fact that they are  equivalent to the Jacobi identities for $y_1$, $y_2$ and all  $a\in R_1$, $$\{y_2,\{y_1,a\}\}+\{y_1,\{a,y_2\}\}+\{a,\{y_2,y_1\}\}=0,$$
as follows.
Observe the following:
\begin{align*}
&\{\{y_2,y_1\},a\}\\
=& \{q_{11}y_1^2,a\}+\{q_{12}y_1y_2,a\}+\{w_1y_1,a\}+\{w_2y_2,a\}+\{w_0,a\}\\
=&(2q_{11}y_1+q_{12}y_2+w_1)\{y_1,a\}+(q_{12}y_1+w_2)\{y_2,a\}+y_1\{w_1,a\}+y_2\{w_2,a\}+\{w_0,a\}\\
=&(2q_{11}\alpha_{11}(a)+q_{12}\alpha_{21}(a))y_1^2+q_{12}\alpha_{12}(a)y_2^2+(2q_{11}\alpha_{12}(a)+q_{12}\alpha_{22}(a)+q_{12}\alpha_{11}(a))y_1y_2\\
&+(2q_{11}\nu_1(a)+q_{12}\nu_2(a)+w_1 \alpha_{11}(a)+w_2\alpha_{21}(a)+\{w_1,a\})y_1\\
&+(q_{12}\nu_1(a)+w_1 \alpha_{12}(a)+w_2\alpha_{22}(a)+\{w_2,a\})y_2+w_1\nu_1(a)+w_2\nu_2(a)+\{w_0,a\},
\end{align*}
\begin{align*}
&\{y_2,\{y_1,a\}\}\\
=&\{y_2,\alpha_{11}(a)y_1\}+\{y_2,\alpha_{12}(a)y_2\}+\{y_2,\nu_1(a)\}\\
=&\alpha_{11}(a)\{y_2,y_1\}+y_1\{y_2,\alpha_{11}(a)\}+y_2\{y_2,\alpha_{12}(a)\}+\{y_2,\nu_1(a)\}\\
=&(q_{11}\alpha_{11}(a)+\alpha_{21}\alpha_{11}(a))y_1^2+\alpha_{22}\alpha_{12}(a)y_2^2+(q_{12}\alpha_{11}(a)+\alpha_{22}\alpha_{11}(a)+\alpha_{21}\alpha_{12}(a))y_1y_2\\
&+(w_1\alpha_{11}(a)+\nu_2 \alpha_{11}(a) +\alpha_{21}\nu_1(a))y_1+(w_2\alpha_{11}(a)+\nu_2 \alpha_{12}(a)+\alpha_{22}\nu_1(a))y_2\\
&+w_0 \alpha_{11}(a)+\nu_2 \nu_1(a),
\end{align*}
\begin{align*}
&\{y_1,\{y_2,a\}\}\\
=&\{y_1,\alpha_{21}(a)y_1\}+\{y_1,\alpha_{22}(a)y_2\}+\{y_1,\nu_2(a)\}\\
=&y_1\{y_1,\alpha_{21}(a)\}+\alpha_{22}(a)\{y_1,y_2\}+y_2\{y_1,\alpha_{22}(a)\}+\{y_1,\nu_2(a)\}\\
=&(\alpha_{11}\alpha_{21}(a)-q_{11}\alpha_{22}(a))y_1^2+\alpha_{12}\alpha_{22}(a)y_2^2+(\alpha_{11}\alpha_{22}(a)+\alpha_{12}\alpha_{21}(a)-q_{12}\alpha_{22}(a))y_1y_2\\
&+(\nu_1 \alpha_{21}(a) +\alpha_{11}\nu_2(a)-w_1\alpha_{22}(a))y_1+(\nu_1 \alpha_{22}(a)+\alpha_{12}\nu_2(a)-w_2\alpha_{22}(a))y_2\\
&+\nu_1 \nu_2(a)-w_0 \alpha_{22}(a).
\end{align*}
Comparing coefficients of  $y_1^2, y_2^2, y_1y_2, y_1, y_2$ and the constant terms respectively, we have that $\{y_2,\{y_1,a\}\}=\{\{y_2,y_1\},a\}+\{y_1,\{y_2,a\}\}$ for all $a\in R_1$ if and only if the conditions (8)-(13) hold.
\end{proof}
\begin{rem}
 The conditions (1)-(13) in the above proposition can be thought as consequences of taking semiclassical limits of those conditions in \cite[Lemma 1.7(a), (b) and (R3)]{ZhZh} for double extensions.
\end{rem}
Now we can characterize the fact that the polynomial ring $R_1[y_1,y_2]$ is a Poisson algebra with Poisson bracket (\ref{PB}).

\begin{thm}\label{LOW1}
 Let $R$ be a Poisson ${\bf k}$-algebra with Poisson bracket $\{-,-\}_R$ and  $R[y_1,y_2]$ be the commutative polynomial ring over $R$. Set
 $$\begin{aligned}
&q=\{q_{11},q_{12}\}\subset {\bf k},&w&=\{w_1,w_2,w_0\}\subset R,\\
&\alpha:R\longrightarrow M_{2\times2}(R),&\ \ \ \alpha(a)&=\left(\begin{matrix}\alpha_{11}(a)&\alpha_{12}(a)\\ \alpha_{21}(a)&\alpha_{22}(a)\end{matrix}\right),\\
&\nu:R\longrightarrow M_{2\times1}(R),&\ \ \ \nu(a)&=\left(\begin{matrix}\nu_{1}(a)\\ \nu_{2}(a)\end{matrix}\right),\\
&y=\left(\begin{matrix} y_1\\ y_2\end{matrix}\right)\in M_{2\times1}(R[y_1,y_2]).
\end{aligned}$$
 Then $R[y_1,y_2]$ becomes a Poisson algebra with Poisson bracket $\{-,-\}$ such that
\begin{equation}\label{PB1}
\begin{aligned}
\{a,b\}&=\{a,b\}_R,\\
\{y_2,y_1\}&=q_{11}y_1^2+q_{12}y_1y_2+w_1y_1+w_2y_2+w_0,\\
\{y_1,a\}&=\alpha_{11}(a)y_1+\alpha_{12}(a)y_2+\nu_1(a),\\
\{y_2,a\}&=\alpha_{21}(a)y_1+\alpha_{22}(a)y_2+\nu_2(a)
\end{aligned}
\end{equation}
for all $a,b \in R$ if and only if the PDE-data $\{q,\alpha,\nu,w\}$ satisfies the conditions (1)-(13) of Proposition~\ref{Poisson structure},  where $R_1$ is replaced by $R$.

Shortly speaking, $R[y_1,y_2]$ becomes a Poisson algebra with Poisson bracket
\begin{equation}\label{PB11}
\begin{aligned}
\{a,b\}&=\{a,b\}_R,\\
\{y_2,y_1\}&=q_{11}y_1^2+q_{12}y_1y_2+w_1y_1+w_2y_2+w_0,\\
\{y,a\}&=\alpha(a)y+\nu(a)
\end{aligned}
\end{equation}
for all $a,b\in R$ if and only if the PDE-data $\{q,\alpha,\nu,w\}$ satisfies the following conditions (a)-(e) for all $a,b\in R$.

(a)  $\alpha(ab)=a\alpha(b)+b\alpha(a)$.

(b) $\nu(ab)=a\nu(b)+b\nu(a)$.

(c) $\alpha(\{a,b\})=\{\alpha(a),b\}+\{a,\alpha(b)\}+[\alpha(a),\alpha(b)]$.

(d) $\nu(\{a,b\})=\{\nu(a),b\}+\{a,\nu(b)\}+\alpha(a)\nu(b)-\alpha(b)\nu(a)$.

(e) $\{y_2,\{y_1,a\}\}+\{y_1,\{a,y_2\}\}+\{a,\{y_2,y_1\}\}=0.$
\end{thm}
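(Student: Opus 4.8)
The plan is to treat the two implications asymmetrically: the forward direction is a matter of reading off the stated identities from the Poisson axioms, whereas the reverse direction—actually \emph{producing} the bracket—carries the real content. For the forward implication, assume $R[y_1,y_2]$ is a Poisson algebra with bracket (\ref{PB1}). Since each $\{y_i,-\}$ is then a derivation, expanding $\{y_i,ab\}=a\{y_i,b\}+b\{y_i,a\}$ and comparing the coefficients of $y_1,y_2$ and the constant term shows that every $\alpha_{ij}$ and $\nu_i$ is a derivation of $R$; this is (a), (b), equivalently condition (1). The remaining identities are obtained by evaluating the Jacobi identity on generator triples: $\{y_i,\{a,b\}\}+\{a,\{b,y_i\}\}+\{b,\{y_i,a\}\}=0$ unpacks, via Lemma~\ref{map} and Lemma~\ref{map1}, into (c), (d) (equivalently (2)--(7)), and the triple $(y_1,y_2,a)$ yields (e) (equivalently (8)--(13)) by the coefficient comparison already carried out in the proof of Proposition~\ref{Poisson structure}.

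For the reverse implication, suppose the DE-data satisfies the listed conditions. First I would extend (\ref{PB1}) to a skew-symmetric biderivation on all of $R[y_1,y_2]$. Because $R[y_1,y_2]$ is free as a commutative $R$-algebra on $y_1,y_2$ and $\{-,-\}_R$ is already a skew biderivation of $R$, it suffices to prescribe the derivations $\{y_1,-\}$ and $\{y_2,-\}$ on the generating set $R\cup\{y_1,y_2\}$: conditions (a), (b) (i.e.\ (1)) guarantee that the assignment $a\mapsto\alpha(a)y+\nu(a)$ defines genuine derivations $R\to R[y_1,y_2]$, and setting $\{y_1,y_2\}:=-\{y_2,y_1\}$ with $\{y_i,y_i\}=0$ fixes the values on the new variables. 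Extending by the Leibniz rule in each argument produces a well-defined bilinear map $\{-,-\}$, and skew-symmetry on generators propagates to skew-symmetry throughout.

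It remains to verify the Jacobi identity, which I expect to be the main obstacle. The key point is that for \emph{any} skew-symmetric biderivation the Jacobiator $J(f,g,h):=\{f,\{g,h\}\}+\{g,\{h,f\}\}+\{h,\{f,g\}\}$ is a derivation in each of its arguments and is totally antisymmetric; the derivation property follows from a short computation in which the cross terms cancel using skew-symmetry. Consequently $J$ vanishes identically as soon as it vanishes on every triple drawn from $R\cup\{y_1,y_2\}$. I would then check these triples: $J(a,b,c)=0$ for $a,b,c\in R$ because $\{-,-\}_R$ is a Poisson bracket and stays inside $R$; $J(y_i,a,b)=0$ for $a,b\in R$ by Lemma~\ref{map1} together with (c), (d) (equivalently (2)--(7)); and $J(y_1,y_2,a)=0$ for $a\in R$ by the computation in the proof of Proposition~\ref{Poisson structure}, which is precisely (e) (equivalently (8)--(13)). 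Any triple containing a repeated $y_i$—and since there are only two new variables, every all-$y$ triple repeats—vanishes automatically by total antisymmetry of $J$. This exhausts the generator triples, so $\{-,-\}$ satisfies Jacobi and $R[y_1,y_2]$ is a Poisson algebra.

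Finally I would record that the two formulations are equivalent, so that both the ``(1)--(13)'' version and the compact ``(a)--(e)'' version follow at once. Indeed (a), (b) reproduce (1); expanding the matrix identity (c) entrywise and using that $R$ is commutative (so the diagonal commutator terms cancel) yields exactly (2)--(5), while (d) yields (6), (7) and (e) is (8)--(13) repackaged. The step requiring the most care is the triderivation reduction for the Jacobiator, since once that is in hand every generator triple reduces to the computations already performed in Lemma~\ref{map}, Lemma~\ref{map1}, and Proposition~\ref{Poisson structure}; the secondary point needing attention is the well-definedness of the biderivation, which rests on the freeness of $R[y_1,y_2]$ over $R$ and conditions (a), (b).
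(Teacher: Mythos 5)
Your proposal is correct and follows essentially the same route as the paper: the forward direction reads the conditions off Lemma~\ref{map}, Lemma~\ref{map1} and Proposition~\ref{Poisson structure}, and the reverse direction builds a skew-symmetric biderivation extending (\ref{PB1}) and reduces the Jacobi identity to generator triples via the fact that the Jacobiator of a skew biderivation is a derivation in each argument---which is exactly the cancellation computation the paper performs before its induction on degrees. The only cosmetic difference is that the paper realizes the bracket by an explicit bivector formula (making well-definedness immediate) rather than by Leibniz extension from generators plus freeness, and your explicit appeal to total antisymmetry to dispose of repeated-variable triples is a point the paper leaves implicit.
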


We will call the Poisson algebra $R[y_1,y_2]$ with Poisson bracket (\ref{PB1}) a {\it Poisson double extension} of $R$ with variables $y_1, y_2$ and  PDE-data $\{q,\alpha,\nu,w\}$. Moreover, a Poisson double extension is called {\it trimmed} if $\nu=w=0$ in the PDE-data. By \eqref{PDE1} and \eqref{PDE3}, it is easy to see that the semiclassical limits of trimmed double extensions yield trimmed Poisson double extensions.

\begin{proof}
Note that (\ref{PB1}) is equivalent to (\ref{PB11}) and that the conditions (1)-(13) of Proposition~~\ref{Poisson structure} are equivalent to (a)-(e) by the proof of Proposition~~\ref{Poisson structure}.

$(\Rightarrow)$
Suppose that $R[y_1,y_2]$ is a Poisson algebra with the Poisson bracket \eqref{PB1}. Then we obtain the conditions (1)-(13) of Proposition~\ref{Poisson structure}
in which $R_1$ is replaced by $R$.

$(\Leftarrow)$
Suppose that the PDE-data $\{q,\alpha,\nu,w\}$ satisfies the conditions (1)-(13) of Proposition~\ref{Poisson structure}. Extend derivations  $\{-,-\}_R$, $\alpha_{ij}$, $\nu_j$ ($i,j=1,2)$ of $R$ to $R[y_1,y_2]$
by
$$\{ay_1^ky_2^\ell, by_1^my_2^n\}_R=\{a,b\}_Ry_1^{k+m}y_2^{\ell+n},\ \ \alpha_{ij}(by_1^my_2^n)=\alpha_{ij}(b)y_1^my_2^n, \ \ \nu_{j}(by_1^my_2^n)=\nu_{j}(b)y_1^my_2^n$$
for all $k,\ell,m,n\geq0$, and $a, b\in R$.
Then the extensions  $\{-,-\}_R$, $\alpha_{ij}$, $\nu_j$ are derivations of $R[y_1,y_2]$ and thus
$$\begin{aligned}
\{-,-\}:&=\{-,-\}_R+\frac{\partial}{\partial y_1}\wedge (y_1\alpha_{11}+y_2\alpha_{12}+\nu_1)+\frac{\partial}{\partial y_2}\wedge (y_1\alpha_{21}+y_2\alpha_{22}+\nu_2)\\
&\qquad-(q_{11}y_1^2+q_{12}y_1y_2+\omega_1y_1+\omega_2y_2+\omega_0)\frac{\partial}{\partial y_1}\wedge\frac{\partial}{\partial y_2}
\end{aligned}$$
is skew-symmetric and satisfies Leibniz rule on $R[y_1,y_2]$. Moreover $\{-,-\}$ satisfies (\ref{PB1}). Hence it suffices to show that $\{-,-\}$ satisfies Jacobi identity.

 We see that the conditions (2)-(7) are equivalent to the conditions
 $$\{\{a,b\},y_i\}+\{\{b,y_i\},a\}+\{\{y_i,a\},b\}=0$$
 for $i=1,2$ by Lemma~\ref{map1} and that the conditions (8)-(13) are equivalent to the condition
 $$\{\{y_1,y_2\},a\}+\{\{y_2,a\},y_1\}+\{\{a,y_1\},y_2\}=0$$ by the proof of Proposition~\ref{Poisson structure}.
For polynomials $f,g,h\in R[y_1,y_2]$, suppose that $h=h_1h_2$ and that  the triples $(f,g,h_1)$ and $(f,g,h_2)$  satisfy Jacobi identity. Then
we have that
\begin{align*}
\{\{f,g\},h\}&+\{\{g,h\},f\}+\{\{h,f\},g\}\\
=\ &\{\{f,g\},h_1h_2\}+\{\{g,h_1h_2\},f\}+\{\{h_1h_2,f\},g\}\\
=\ &h_1 \{\{f,g\},h_2\}+ h_2 \{\{f,g\},h_1\}\\
\ &+\{h_1\{g,h_2\},f\}+\{h_2\{g,h_1\},f\}+\{h_1\{h_2,f\},g\}+\{h_2\{h_1,f\},g\}\\
=\ &h_1 ( \{\{f,g\},h_2\}+\{\{g,h_2\},f\}+\{\{h_2,f\},g\} )\\
\ &+h_2( \{\{f,g\},h_1\}+\{\{g,h_1\},f\}+\{\{h_1,f\},g\} )\\
\ &+\{h_1,f\}\{g,h_2\}+\{h_2,f\}\{g,h_1\}+\{h_1,g\}\{h_2,f\}+\{h_2,g\}\{h_1,f\}\\
=\ &0.
\end{align*}
Therefore Jacobi identity for $\{-,-\}$ holds by using induction on degrees of $f,g,h$.
\end{proof}

\begin{rem}
For a Poisson double extension $R[y_1,y_2]$ with  PDE-data $\{q,\alpha,\nu,w\}$,  it is enough to consider only the cases
$\{0,p\}$, $\{1,0\}$   and $\{0,0\}$ for $q=\{q_{11},q_{12}\}$, where $0\neq p\in{\bf k}$.
In fact, as in the case of double extension \cite[Remark~1.4]{ZhZh},    one can choose a suitable basis for the vector space ${\bf k}y_1+{\bf k}y_2$ to simplify the PDE-data $\{q, \alpha, \nu, w\}$ of the Poisson double extension $R[y_1,y_2]$ as follows:
  \begin{enumerate}
    \item If $q_{12}\neq 0$,  by setting $p=q_{11}q_{12}^{-1}, z_1=y_1,z_2=py_1+y_2$, then the Poisson double extension $R[y_1,y_2]$  can be presented by the Poisson double extension $R[z_1,z_2]$ with the variables $z_1,z_2$ and the  PDE-data $$\left\{
 \{0,q_{12}\},
  \left(\begin{matrix}\alpha_{11}-p\alpha_{12}&\alpha_{12}\\ \alpha_{21}+p(\alpha_{11}-\alpha_{22})-p^2\alpha_{12}&p\alpha_{12}+\alpha_{22}\end{matrix}\right),
   \left(\begin{matrix}\nu_1\\ \nu_2+p\nu_1\end{matrix}\right), \{w_1-pw_2,w_2,w_0\}\right\}.$$
    \item If $q_{12}=0$ and $q_{11}\neq 0$, by setting $z_1=q_{11}y_1,z_2=y_2$, then the Poisson double extension $R[y_1,y_2]$ can be presented by the Poisson double extension $R[z_1,z_2]$ with the variables $z_1,z_2$ and the  PDE-data
        $$\left\{
 \{1,0\},
  \left(\begin{matrix}\alpha_{11}&q_{11}\alpha_{12}\\ q_{11}^{-1}\alpha_{21}&\alpha_{22}\end{matrix}\right),
   \left(\begin{matrix}q_{11}\nu_1\\ \nu_2\end{matrix}\right), \{w_1, q_{11}w_2,q_{11}w_0\}\right\}.$$
 \item  If $q_{12}=0$ and $q_{11}= 0$,   then the Poisson double extension $R[y_1,y_2]$  has the  PDE-data
  $$\left\{\{0,0\}, \alpha, \nu, w\right\}.$$
  \end{enumerate}
\end{rem}

\section{Smoothness of Poisson double extensions and modular derivations}
In this section, we will prove that  Poisson double extensions preserve the smoothness of Poisson algebras, and give an explicit formula of modular derivations for such extensions.

Recall that the modular derivations are closely related to the Poisson homological duality theory for Poisson algebras.
In \cite{Kon}, Kontsevich investigated the relationship between the Poisson variety and its deformation quantizations, and related the Poisson (co)homologies of  Poisson algebras to the Hochschild (co)homologies of their deformation quantization algebras. After that, Dolgushev proved in \cite{Do} that: for a smooth Poisson affine variety $A$ with  trivial canonical bundle, its deformation quantization algebra $(A[[\hbar]], *)$ is Calabi-Yau if and only if the corresponding Poisson structure is unimodular, which is equivalent to say that the modular derivation is log-Hamiltonian. By using modular derivation, he also decoded the Poincar\'{e} duality theories of Hochschild (co)homologies and Poisson (co)homologies.

For a smooth Poisson algebra, a general Poincar\'{e} duality theorem was proved by Huebschmann in terms of Lie-Rinehart algebras \cite{Hu99}. See \cite{LWW} and \cite{Zhu14} for the twisted Poincar\'{e} duality theory of polynomial Poisson algebras and Frobenius Poisson algebras, respectively. All these results imply that the modular derivations of  Poisson algebras determine the concrete forms of duality theories. Hence, we will focus on the modular derivations of Poisson double extensions of Poisson algebras.

\begin{defn}\label{modular4} Let $R$ be a smooth Poisson algebra of
Krull dimension $n$ with trivial canonical bundle $\Omega^n(R)=R \vol$,
where $\vol$ is a volume form. The {\it modular derivation} of $R$
with respect to $\vol$ is defined as the map $\phi_{\vol}: R \to R$
such that for any $f\in R$,
$$\phi_{\vol}(f)=\frac{\mathscr{L}_{H_f}(\vol)}{\vol},$$ where  $H_f:=\{f,-\}: R \to R$ is the hamiltonian derivation associated to $f$,
$\mathscr{L}_{H_f}:=[\de, \iota_{H_f}]$ is the Lie-derivation on $\Omega^*(R)$,  and $\iota_{H_f}$ is the contraction map induced by the derivation $H_f$.
\end{defn}

\begin{rem}
 For more related materials on modular derivations, see \cite{LWW} and \cite{Wang}. In fact, the modular derivation is a Poisson derivation for Poisson algebra $R$. Moreover,  when the volume form is changed, the corresponding modular
derivation is modified by a so called log-hamiltonian derivation. That is,
if $\lambda$ is another volume form of $R$, then $\lambda = u \vol$ for some
unit $u \in R$, and $\phi_{\lambda} = \phi_{\vol} - u^{-1}H_u.$ The
Poisson derivation $ u^{-1}\{u, -\}: R \to R$ is called a {\it
log-hamiltonian derivation} of $R$ or an {\it inner Poisson derivation} determined by the invertible
element $u$.
\end{rem}

\begin{defn} For  an affine smooth Poisson algebra $R$ with trivial canonical bundle $R\vol$, $R$ is said to be {\it unimodular} if its modular derivation $\phi_{\vol}$ is a log-hamiltonian derivation. This is also equivalent to say one can change the volume form such that the corresponding modular derivation is zero.
\end{defn}

\begin{prop}
If $R$ is an affine smooth Poisson algebra with trivial canonical bundle, then so is its polynomial algebra $R[y_1,y_2]$.
\end{prop}
\begin{proof}
Let $S=R[y_1, y_2]$. Suppose that the dimension of $R$ is $n$ and that the canonical bundle $\Omega^n(R)\cong R\vol$. Since $S$ is a polynomial ring over $R$ with variables $y_1$ and $y_2$, $S\otimes_R(\Omega^1(R)\oplus R\de\! y_1\oplus R\de\!y_2)$ is the K\"{a}hler differential module for $S$.
It is  easy to see that $S$ is an affine smooth algebra of dimension $n+2$ and that its canonical bundle $\Omega^{n+2}(S)\cong S\vol\wedge\de\! y_1\wedge\de\! y_2$.
\end{proof}
\begin{thm}\label{main-result}
Let $R$ be an affine smooth Poisson algebra of dimension $n$ with trivial canonical bundle  $\Omega^n(R)\cong R\vol$ and $S=R[y_1, y_2]$ be its Poisson double extension with variables $y_1, y_2$ and  PDE-data $\{q,\alpha,\nu,w\}$. Suppose $\phi$ and $\psi$ are the modular derivations of $R$ and $S$ with respect to the volume forms $\vol$ and $\vol\wedge\de\! y_1\wedge\de\! y_2$, respectively. Then the restriction $\psi|_R$ of $\psi$ on  $R$  is $$\psi|_R=\phi-\alpha_{11}-\alpha_{22},$$ and
  \begin{align*}
  \psi(y_1) & =(u_{11}-q_{12})y_1+u_{12}y_2+v_1-w_2; \\
  \psi(y_2) & =(u_{21}+2q_{11})y_1+(u_{22}+q_{12})y_2+v_2+w_1,
\end{align*}
where the $u_{11}, u_{12}, u_{21}, u_{22}, v_1, v_2\in R$ are determined by $\mathscr{L}_{\alpha_{ij}}(\vol)=u_{ij}\vol$ and $\mathscr{L}_{\nu_i}(\vol)=v_i\vol$ for $i,j\in \{1,2\}$.
\end{thm}
\begin{proof}
By Definition~\ref{modular4}, for any $a\in R$,
\begin{align*}
\psi(a)\vol\wedge\de\! y_1\wedge\de\! y_2&=\mathscr{L}_{H_a}(\vol\wedge\de\! y_1\wedge\de\! y_2)\\
&=\de \circ\iota_{H_a}(\vol\wedge\de\! y_1\wedge\de\! y_2)\\
&=\de (\iota_{H_a}(\vol)\wedge\de\! y_1\wedge\de\! y_2+(-1)^n\{a,y_1\}\vol\wedge\de\! y_2+(-1)^{n+1}\{a,y_2\}\vol\wedge\de\! y_1)\\
&=\mathscr{L}_{H_a}(\vol)\wedge\de\! y_1\wedge\de\! y_2-(\alpha_{11}(a)+\alpha_{22}(a))\vol\wedge\de\! y_1\wedge\de\! y_2\\
&=(\phi(a)-\alpha_{11}(a)-\alpha_{22}(a))\vol\wedge\de\! y_1\wedge\de\! y_2.
\end{align*}
Moreover,
\begin{align*}
&\psi(y_1)\vol\wedge\de\! y_1\wedge\de\! y_2\\
=&\mathscr{L}_{H_{y_1}}(\vol\wedge\de\! y_1\wedge\de\! y_2)\\
=&\de \circ\iota_{H_{y_1}}(\vol\wedge\de\! y_1\wedge\de\! y_2)\\
=&\de (\iota_{H_{y_1}}(\vol\wedge\de\! y_1\wedge\de\! y_2)-(-1)^n\{y_1, y_2\}\vol\wedge\de\! y_1)\\
=&\de ((y_1\iota_{\alpha_{11}}(\vol)+y_2\iota_{\alpha_{12}}(\vol)+\iota_{\nu_{1}}(\vol))\wedge\de\! y_1\wedge\de\! y_2-(-1)^n\{y_1, y_2\}\vol\wedge\de\! y_1)\\
=&(y_1\mathscr{L}_{\alpha_{11}}(\vol)+y_2\mathscr{L}_{\alpha_{12}}(\vol)+ \mathscr{L}_{\nu_1}(\vol)) \wedge\de\! y_1\wedge\de\!y_2-(q_{12}y_1+w_2)\vol\wedge\de\! y_1\wedge\de\! y_2\\
=&((u_{11}-q_{12})y_1+u_{12}y_2+v_1-w_2)\vol\wedge\de\! y_1\wedge\de\! y_2,
\end{align*}
and
\begin{align*}
&\psi(y_2)\vol\wedge\de\! y_1\wedge\de\! y_2\\
=&\mathscr{L}_{H_{y_2}}(\vol\wedge\de\! y_1\wedge\de\! y_2)\\
=&\de \circ\iota_{H_{y_2}}(\vol\wedge\de\! y_1\wedge\de\! y_2)\\
=&\de (\iota_{H_{y_2}}(\vol\wedge\de\! y_1\wedge\de\! y_2)+(-1)^n\{y_2, y_1\}\vol\wedge\de\! y_2)\\
=&\de ((y_1\iota_{\alpha_{21}}(\vol)+y_2\iota_{\alpha_{22}}(\vol)+\iota_{\nu_{2}}(\vol))\wedge\de\! y_1\wedge\de\! y_2+(-1)^n\{y_2, y_1\}\vol\wedge\de\! y_2)\\
=&(y_1\mathscr{L}_{\alpha_{21}}(\vol)+y_2\mathscr{L}_{\alpha_{22}}(\vol)+ \mathscr{L}_{\nu_2}(\vol)) \wedge\de\! y_1\wedge\de\!y_2+(2q_{11}y_1+q_{12}y_2+w_1)\vol\wedge\de\! y_1\wedge\de\! y_2\\
=&((u_{21}+2q_{11})y_1+(u_{22}+q_{12})y_2+v_2+w_1)\vol\wedge\de\! y_1\wedge\de\! y_2.
\end{align*}
This completes the proof.
\end{proof}

\begin{cor}
Let $R$ and $S$ be as in Theorem~\ref{main-result}.  Then the trimmed Poisson double extension $S=R[y_1,y_2]$ with the following PDE-data is a unimodular Poisson algebra:
  $$\left\{
 \{0,0\},
  \left(\begin{matrix}\phi&0\\ 0&0\end{matrix}\right),
   \left(\begin{matrix}0\\0\end{matrix}\right), \{0,0,0\}\right\}.$$
\end{cor}
\begin{proof}
  It is easy to check that the equations in Proposition~\ref{Poisson structure} hold. By Theorem~\ref{main-result}, it suffices to check that the modular derivation of Poisson algebra $S$ is trivial.

  Since $q_{11}=q_{12}=0$, $\alpha_{12}=\alpha_{21}=\alpha_{22}=0$ and $\alpha_{11}=\phi$ is the modular derivation of $R$, $\psi(a)=\psi(y_2)=0$ for all $a\in R$. By \cite[Proposition~3.8]{Wang}, $\mathscr{L}_{\phi}(\vol)=0$, i.e., $\psi(y_1)=0$.
\end{proof}

\section{Relationship between Poisson double extensions and iterated Poisson polynomial extensions}

We recall the notion of Poisson polynomial extension, which is an analogue of Ore extension \cite{Oh8}. Let $R$ be a Poisson algebra. For   derivations $\beta$ and $\nu$ on $R$, the polynomial ring $R[x]$ becomes a Poisson algebra, containing $R$ as a Poisson subalgebra, with Poisson bracket $\{x,a\}=\beta(a)x+\nu(a)$ for all $a\in R$ if and only if $\beta$ is a Poisson derivation and the pair  $(\beta,\nu)$ satisfies the condition
\begin{equation}\label{SKEPOI}
\nu(\{a,b\})=\{\nu(a),b\}+\{a,\nu(b)\}+\beta(a)\nu(b)-\nu(a)\beta(b)
\end{equation}
for all $a,b\in R$. Such a Poisson algebra $R[x]$ is called a Poisson polynomial extension of $R$ and denoted by $R[x;\beta,\nu]_p$.

Here we investigate the relationship between Poisson double extensions and iterated Poisson polynomial extensions.

\begin{prop}\label{IteratedORE}
Let  $A=R[y_1,y_2]$ be a Poisson double extension of $R$ with variables $y_1,y_2$ and PDE-data
$$\left\{q=\{q_{11},q_{12}\},\ \alpha=\begin{pmatrix}\alpha_{11}&\alpha_{12}\\ \alpha_{21}&\alpha_{22}  \end{pmatrix},\ \nu=\begin{pmatrix}\nu_{1}\\ \nu_{2} \end{pmatrix},\ w=\{w_1,w_2,w_0\}\right\}.$$
Then $A$ can be presented by an iterated Poisson polynomial extension of the form
$$A=R[y_1;\beta_1,\delta_1]_p[y_2;\beta_2,\delta_2]_p\ \ \ \text{or}\ \ \ A=R[y_2;\beta_2,\delta_2]_p[y_1;\beta_1,\delta_1]_p$$
if and only if $\alpha_{12}=0$ or $\alpha_{21}=0, q_{11}=0$.
In such cases, the following hold.

(1) If $\alpha_{12}=0$ then $A=R[y_1;\alpha_{11},\nu_1]_p[y_2;\beta,\mu]_p,$ where $$\begin{aligned}
\beta(a)&=\alpha_{22}(a),&\beta(y_1)&=q_{12}y_1+w_2,\\
\mu(a)&=\alpha_{21}(a)y_1+\nu_2(a),&\mu(y_1)&=q_{11}y_1^2+w_1y_1+w_0
\end{aligned}$$
for all $a\in R$.

(2)  If $\alpha_{21}=0, q_{11}=0$ then $A=R[y_2;\alpha_{22},\nu_2]_p[y_1;\beta,\mu]_p,$ where $$\begin{aligned}
\beta(a)&=\alpha_{11}(a),&\beta(y_2)&=-q_{12}y_2-w_1,\\
\mu(a)&=\alpha_{12}(a)y_2+\nu_1(a),&\mu(y_2)&=-w_2y_2-w_0
\end{aligned}$$
for all $a\in R$.
\end{prop}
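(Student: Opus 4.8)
The plan is to use the one-variable criterion for Poisson polynomial extensions recalled just above the statement (\cite[Theorem 1.1]{Oh8}, condition (\ref{SKEPOI})) in \emph{both} directions, together with the explicit brackets (\ref{PB1}) of the double Poisson extension $A=R[y_1,y_2]$. Throughout I will exploit that $A$ is free over $R$ on the monomials $y_1^iy_2^j$, so that a bracket may be compared coefficient by coefficient, and that every hamiltonian $\{c,-\}$ is a derivation of $A$.

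For the forward (only if) implication, suppose first that $A=R[y_1;\beta_1,\delta_1]_p[y_2;\beta_2,\delta_2]_p$ with the same variables $y_1,y_2$. By construction the inner factor is a Poisson subalgebra whose underlying ring is $R[y_1]$, so $\{y_1,a\}\in R[y_1]$ for all $a\in R$; comparing with $\{y_1,a\}=\alpha_{11}(a)y_1+\alpha_{12}(a)y_2+\nu_1(a)$ forces the $y_2$-coefficient to vanish, i.e. $\alpha_{12}=0$. Suppose instead that $A=R[y_2;\beta_2,\delta_2]_p[y_1;\beta_1,\delta_1]_p$. Then $R[y_2]$ is a Poisson subalgebra, so $\{y_2,a\}\in R[y_2]$ gives $\alpha_{21}=0$; moreover $\{y_1,y_2\}=\beta_1(y_2)y_1+\delta_1(y_2)$ has degree at most one in $y_1$ since $\beta_1(y_2),\delta_1(y_2)\in R[y_2]$, whereas $\{y_1,y_2\}=-\{y_2,y_1\}$ has $y_1^2$-coefficient $-q_{11}$ by (\ref{PB1}); hence $q_{11}=0$. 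Thus one of the two stated conditions holds.

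For the converse I treat case (1), $\alpha_{12}=0$. First I check that $S:=R[y_1]$ is a Poisson subalgebra of $A$: as $\{-,-\}$ is a biderivation and the generator brackets $\{a,b\}\in R$, $\{y_1,a\}=\alpha_{11}(a)y_1+\nu_1(a)\in S$, $\{y_1,y_1\}=0$ all lie in $S$, the set of elements bracketing into $S$ contains the algebra generators and is closed under products, whence $\{S,S\}\subseteq S$. Since $S$ is then a Poisson algebra with $\{y_1,a\}=\alpha_{11}(a)y_1+\nu_1(a)$, the only-if part of \cite[Theorem 1.1]{Oh8} yields that $\alpha_{11}$ is a Poisson derivation and $(\alpha_{11},\nu_1)$ satisfies (\ref{SKEPOI}), so $S=R[y_1;\alpha_{11},\nu_1]_p$. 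Next I argue that $\{y_2,s\}\in Sy_2+S$ for every $s\in S$: the set of such $s$ is a ${\bf k}$-subspace containing $R$ and $y_1$ (by the generator brackets in (\ref{PB1})) and closed under multiplication, because $\{y_2,-\}$ is a derivation and $Sy_2+S$ is an $S$-submodule; hence it is all of $S$. As $A=S[y_2]$ is $S$-free on $\{y_2^j\}$, I may define ${\bf k}$-linear maps $\beta,\mu:S\to S$ by $\{y_2,s\}=\beta(s)y_2+\mu(s)$; the derivation property of $\{y_2,-\}$ forces $\beta,\mu$ to be derivations of $S$, and evaluating on $a\in R$ and on $y_1$ via (\ref{PB1}) reproduces exactly the formulas for $\beta,\mu$ in part (1). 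Finally, because $A=S[y_2]$ is a Poisson algebra with $\{y_2,s\}=\beta(s)y_2+\mu(s)$, the only-if part of \cite[Theorem 1.1]{Oh8} over the base $S$ gives that $\beta$ is a Poisson derivation and $(\beta,\mu)$ satisfies (\ref{SKEPOI}), so $A=R[y_1;\alpha_{11},\nu_1]_p[y_2;\beta,\mu]_p$.

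Case (2), $\alpha_{21}=0$ and $q_{11}=0$, is handled the same way with the roles of $y_1$ and $y_2$ interchanged, $S':=R[y_2;\alpha_{22},\nu_2]_p$ playing the role of $S$; here the hypothesis $q_{11}=0$ is exactly what makes $\{y_1,y_2\}$ linear in $y_1$ and hence $\{y_1,s\}\in S'y_1+S'$ for all $s\in S'$. The computations are light, and the one point that needs care — the place I expect to spend the most attention — is this closure argument keeping $\{y_2,s\}$ (resp. $\{y_1,s\}$) of degree one in the adjoined variable for every $s$ in the base, since it is precisely what guarantees that $\beta,\mu$ are honest maps into the base rather than into the larger ring $A$, and thus genuine derivations. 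Once the degree bound is established, the identification of the data and the Poisson axioms are automatic from the standing hypothesis that $A$ is already a Poisson algebra, so no direct verification of the identities (8)--(13) of Proposition~\ref{Poisson structure} is required.
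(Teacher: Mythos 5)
Your proposal is correct and takes essentially the same approach as the paper. The forward implication is the paper's argument verbatim (comparing the brackets against the free $R$-basis $\{y_1^iy_2^j\}$ to force $\alpha_{12}=0$, resp.\ $\alpha_{21}=0$ and $q_{11}=0$), and your detailed converse---showing $R[y_1]$ is a Poisson subalgebra, bounding the $y_2$-degree of $\{y_2,s\}$, and invoking \cite[Theorem 1.1]{Oh8}---is precisely the verification the paper compresses into ``proved easily by (\ref{PB1}) and \cite[Theorem 1.1]{Oh8}''.
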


\begin{proof}
$(\Leftarrow)$ It is proved easily by (\ref{PB1}) and \cite[Theorem 1.1]{Oh8}.

$(\Rightarrow)$
Suppose that $A$ is of the form $A=R[y_1;\beta_1,\delta_1]_p[y_2;\beta_2,\delta_2]_p$. Then, for any $a\in R$,
$$\begin{aligned}
\{y_1,a\}&=\beta_1(a)y_1+\delta_1(a)\in Ry_1+R.
\end{aligned}$$
Since its left hand side is $\alpha_{11}(a)y_1+\alpha_{12}(a)y_2+\nu_1(a)$ by (\ref{PB1}), we have that $\alpha_{12}=0$.

Suppose that $A$ is of the form $A=R[y_2;\beta_2,\delta_2]_p[y_1;\beta_1,\delta_1]_p$. Then, for any $a\in R$,
$$\begin{aligned}
\{y_1,y_2\}&=\beta_1(y_2)y_1+\delta_1(y_2)\in R[y_2]y_1+R[y_2],\\
\{y_2,a\}&=\beta_2(a)y_2+\delta_2(a)\in Ry_2+R.
\end{aligned}$$
Hence
$$\begin{aligned}
-(q_{11}y_1^2+q_{12}y_1y_2+w_1y_1+w_2y_2+w_0)&\in R[y_2]y_1+R[y_2],\\
\alpha_{21}(a)y_1+\alpha_{22}(a)y_2+\nu_2(a)&\in Ry_2+R,
\end{aligned}$$
by (\ref{PB1}). Since $A$ is a free $R$-module with basis $\{y_1^iy_2^j| i,j\geq 0\}$,  $q_{11}=0$ and $\alpha_{21}=0$
\end{proof}

We will see in Example~\ref{NONIT} that there exists a Poisson double extension that is not an iterated Poisson polynomial extension.

\begin{lem}
Let $A$ be an iterated Poisson polynomial extension $A=R[y_1;\alpha_1,\nu_1]_p[y_2;\alpha_2,\nu_2]_p$ over  Poisson algebra $R$ such that
$\nu_2(R)\subset Ry_1+R.$
Then there exist ${\bf k}$-linear maps $\nu_2^1,\nu_2^0$ from $R$ to itself such that
\begin{equation}\label{SKEPOI2}
\nu_2(a)=\nu_2^1(a)y_1+\nu_2^0(a)
\end{equation} for all $a\in R.$
\end{lem}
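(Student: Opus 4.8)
The plan is to define $\nu_2^1$ and $\nu_2^0$ as the coefficient maps of $\nu_2$ relative to the free $R$-module structure on $R[y_1]$, and then to verify their ${\bf k}$-linearity by comparing coefficients. Recall that in the iterated extension $A=R[y_1;\alpha_1,\nu_1]_p[y_2;\alpha_2,\nu_2]_p$ the intermediate ring $R[y_1;\alpha_1,\nu_1]_p$ is, as a commutative ring, the polynomial ring $R[y_1]$, hence a free left $R$-module with basis $\{y_1^i\mid i\geq0\}$. Consequently every element of $Ry_1+R$ has a unique expression of the form $r_1y_1+r_0$ with $r_1,r_0\in R$.

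First I would invoke the hypothesis $\nu_2(R)\subseteq Ry_1+R$: for each $a\in R$ the element $\nu_2(a)$ lies in $Ry_1+R$, so by the uniqueness just noted there are uniquely determined elements $\nu_2^1(a),\nu_2^0(a)\in R$ with $\nu_2(a)=\nu_2^1(a)y_1+\nu_2^0(a)$. This simultaneously defines the two maps $\nu_2^1,\nu_2^0\colon R\to R$ and produces the asserted formula \eqref{SKEPOI2}.

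It then remains only to check ${\bf k}$-linearity. Since $\nu_2$ is a derivation of $R[y_1]$ it is ${\bf k}$-linear, so for $a,b\in R$ and $c\in{\bf k}$ we have
$$\nu_2(a+b)=\nu_2(a)+\nu_2(b)=\bigl(\nu_2^1(a)+\nu_2^1(b)\bigr)y_1+\bigl(\nu_2^0(a)+\nu_2^0(b)\bigr)$$
and
$$\nu_2(ca)=c\,\nu_2(a)=\bigl(c\,\nu_2^1(a)\bigr)y_1+\bigl(c\,\nu_2^0(a)\bigr).$$
Comparing these with $\nu_2(a+b)=\nu_2^1(a+b)y_1+\nu_2^0(a+b)$ and $\nu_2(ca)=\nu_2^1(ca)y_1+\nu_2^0(ca)$ and invoking uniqueness of the coefficients forces $\nu_2^i(a+b)=\nu_2^i(a)+\nu_2^i(b)$ and $\nu_2^i(ca)=c\,\nu_2^i(a)$ for $i=0,1$, which is the desired linearity. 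There is no genuine obstacle here: the statement is pure bookkeeping, resting entirely on freeness of $R[y_1]$ over $R$ together with ${\bf k}$-linearity of the derivation $\nu_2$. The only point needing a word of care is the uniqueness of the coefficient expansion, which is exactly what the free-module basis $\{y_1^i\}$ guarantees; once that is in hand, both the definition of $\nu_2^1,\nu_2^0$ and their linearity are immediate.
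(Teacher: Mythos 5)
Your proposal is correct and follows essentially the same route as the paper: both arguments rest on the freeness of the relevant polynomial ring as an $R$-module (the paper cites the basis $\{y_1^iy_2^j\}$ of $A$, you the basis $\{y_1^i\}$ of $R[y_1]$), from which the unique coefficient decomposition defines $\nu_2^1,\nu_2^0$. Your explicit verification of ${\bf k}$-linearity via uniqueness of coefficients is exactly the bookkeeping the paper leaves implicit.
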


\begin{proof}
Note that $A=R[y_1,y_2]$ is a free $R$-module with basis $\{y_1^iy_2^j|i,j\geq0\}$.
Since $\nu_2(R)\subset Ry_1+R$, there exist ${\bf k}$-linear maps $\nu_2^1,\nu_2^1$ from $R$ to itself such that
$$\nu_2(a)=\nu_2^1(a)y_1+\nu_2^0(a)$$
for each $a\in R$.
\end{proof}

\begin{prop}\label{itdoPe}
Let $A$ be an iterated Poisson polynomial extension $A=R[y_1;\alpha_1,\nu_1]_p[y_2;\alpha_2,\nu_2]_p$ over   Poisson algebra $R$ such that
\begin{itemize}
\item $\alpha_2(R)\subset R$,
\item $\nu_2(R)\subset Ry_1+R$,
\item $\alpha_2(y_1)=\mu_{12} y_1+w_2$ for some $\mu_{12}\in{\bf k}$ and $w_2\in R$,
\item  $\nu_2(y_1)=\mu_{11}y_1^2+w_1 y_1+w_0$ for some $\mu_{11}\in{\bf k}$ and $w_1, w_0\in R$.
\end{itemize}
Then
 $A$ is a Poisson double extension with variables $y_1, y_2$ and  PDE-data $\{q,\alpha,\nu,w\}$, where
$$q=\{\mu_{11},\mu_{12}\},\ \ \alpha(a)=\left(\begin{matrix} \alpha_1(a)&0\\ \nu_2^1(a)&\alpha_2(a)\end{matrix}\right),\ \
\nu(a)=\left(\begin{matrix} \nu_1(a)\\ \nu_2^0(a)\end{matrix}\right),\ \ w=\{w_1, w_2, w_0\}$$
and $\nu_2^1,\nu_2^0$ are given in (\ref{SKEPOI2}).
\end{prop}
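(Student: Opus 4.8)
The plan is to compute the Poisson bracket of the iterated Poisson polynomial extension $A$ explicitly on the generators, observe that it coincides with the prescribed form (\ref{PB1}) for the proposed DE-data $\{q,\alpha,\nu,w\}$, and then hand the verification of the defining identities to Theorem~\ref{LOW1}.

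First I would note that $A=R[y_1;\alpha_1,\nu_1]_p[y_2;\alpha_2,\nu_2]_p$ is a Poisson algebra whose underlying commutative ring is the polynomial ring $R[y_1,y_2]$, and that $R$ is a Poisson subalgebra. From the two successive extensions I read off, for $a\in R$,
$$\{y_1,a\}=\alpha_1(a)y_1+\nu_1(a),\qquad \{y_2,a\}=\alpha_2(a)y_2+\nu_2(a).$$
Substituting the hypotheses $\alpha_2(a)\in R$ and $\nu_2(a)=\nu_2^1(a)y_1+\nu_2^0(a)$ from (\ref{SKEPOI2}) rewrites the second bracket as
$$\{y_2,a\}=\nu_2^1(a)y_1+\alpha_2(a)y_2+\nu_2^0(a),$$
which is precisely the third line of (\ref{PB1}) with $\alpha_{21}=\nu_2^1$, $\alpha_{22}=\alpha_2|_R$, $\nu_2=\nu_2^0$; the first bracket is the second line of (\ref{PB1}) with $\alpha_{11}=\alpha_1$, $\alpha_{12}=0$, $\nu_1=\nu_1$. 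Finally, since $y_1\in R[y_1]$, the relation defining the second extension gives $\{y_2,y_1\}=\alpha_2(y_1)y_2+\nu_2(y_1)$, and plugging in $\alpha_2(y_1)=\mu_{12}y_1+w_2$ and $\nu_2(y_1)=\mu_{11}y_1^2+w_1y_1+w_0$ produces
$$\{y_2,y_1\}=\mu_{11}y_1^2+\mu_{12}y_1y_2+w_1y_1+w_2y_2+w_0,$$
the first line of (\ref{PB1}) with $q_{11}=\mu_{11}$, $q_{12}=\mu_{12}$.

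Having matched the bracket, I would finish by the $(\Rightarrow)$ direction of Theorem~\ref{LOW1}: $A$ is already known to be a Poisson algebra on $R[y_1,y_2]$ with bracket of the form (\ref{PB1}) for $\{q,\alpha,\nu,w\}$, so the DE-data must satisfy conditions (1)--(13) of Proposition~\ref{Poisson structure} (equivalently (a)--(e)), and hence $A$ is a double Poisson extension with this DE-data by definition. The only bookkeeping point requiring attention is that the matrix entries $\nu_2^1,\nu_2^0$ and $\alpha_2|_R$ are genuine derivations of $R$, so that the DE-data is well defined; this follows from $\nu_2,\alpha_2$ being derivations of $R[y_1]$ together with the commutativity of $R$, by comparing the coefficients of $y_1$ and of $1$ in $\nu_2(ab)=\nu_2(a)b+a\nu_2(b)$. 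I do not anticipate a substantive obstacle: the content is purely the coefficient comparison above, with Theorem~\ref{LOW1} carrying the Jacobi identity for free.
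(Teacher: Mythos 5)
Your proposal is correct and takes essentially the same route as the paper's own proof: read off the brackets on the generators from the two successive Poisson polynomial extensions, substitute the hypotheses (including the decomposition (\ref{SKEPOI2})) to see that the bracket has exactly the form (\ref{PB1}) with the stated DE-data, and conclude by Theorem~\ref{LOW1}. The paper's proof is simply a terser version of this same computation, with your extra remark about the derivation property of $\nu_2^1$, $\nu_2^0$, $\alpha_2|_R$ being subsumed by the forward direction of Theorem~\ref{LOW1}.
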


\begin{proof}
Since $A=R[y_1,y_2]$ is a Poisson algebra with the following Poisson bracket
$$\begin{aligned}
\{a,b\}&=\{a,b\}_R,\\
\{y_2,y_1\}&=\alpha_2(y_1)y_2+\nu_2(y_1)=\mu_{12}y_1y_2+w_2y_2+\mu_{11}y_1^2+w_1 y_1+w_0,\\
\{y_1,a\}&=\alpha_1(a)y_1+\nu_1(a),\\
\{y_2,a\}&=\alpha_2(a)y_2+\nu_2(a)=\nu_2^1(a)y_1+\alpha_2(a)y_2+\nu_2^0(a),
\end{aligned}$$ for all $a,b\in R$,
the result follows from Theorem~\ref{LOW1}.
\end{proof}


\section{Examples}

In this section, we give examples of Poisson double extensions. In particular, we construct Poisson double extensions which are semiclassical limits of  the double extensions in \cite{ZhZh}.

Let us begin with  constructing a Poisson double extension $R[y_1,y_2]$ for any Poisson algebra $R$.

\begin{exam}\label{polynomial-alg}
For any Poisson algebras $A$ and $B$, note that $A\otimes_{\bf k} B$ is also a Poisson algebra with Poisson bracket
$$\{a_1\otimes b_1, a_2\otimes b_2\}=\{a_1,a_2\}\otimes b_1b_2+a_1a_2\otimes\{b_1,b_2\}$$
for all $a_1,a_2\in A$ and $b_1,b_2\in B$.

For any $q_{11},q_{12},w_1,w_2,w_0\in{\bf k}$, there exists a Poisson double extension ${\bf k}[y_1, y_2]$ of ${\bf k}$  with  PDE-data $\{q=\{q_{11},q_{12}\}, 0, 0, w=\{w_1,w_2,w_0\}\}$ 
since the PDE-data $\{q, 0, 0, w\}$ satisfies the conditions (a)-(e) of Theorem~\ref{LOW1}.
Let $R$ be any Poisson algebra. By identifying $a\otimes1, 1\otimes y_1, 1\otimes y_2\in R\otimes_{\bf k}{\bf k}[y_1,y_2]$ with $a, y_1, y_2\in R[y_1,y_2]$ respectively,
the Poisson algebra $R\otimes_{\bf k}{\bf k}[y_1,y_2]\cong R[y_1,y_2]$ is a Poisson double extension of $R$ with the PDE-data $\{q, 0, 0, w\}$. By Proposition~\ref{IteratedORE}(1),
the Poisson double extension $R[y_1,y_2]$ is an iterated  Poisson polynomial extension $R[y_1][y_2; \beta, \nu]_p$,
where
$$\begin{aligned}
&\beta(a)=0, &&\beta(y_1)=q_{12}y_1+w_2, \\
&\nu(a)=0, &&\nu(y_1)=q_{11}y_1^2+w_1y_1+w_0
\end{aligned}$$
for all $a\in R$.
\end{exam}

\begin{exam}
Let $R={\bf k}[x_1,\ldots,x_n]$ be a polynomial algebra and  $\mu=(\mu_{ij})$ be an $n\times n$-skew symmetric matrix with
entries in ${\bf k}$. Assume that $n\geq2$. By \cite[Example 4,5]{Good3},  $R$ becomes a Poisson algebra with Poisson bracket $\{x_i,x_j\}=\mu_{ij}x_ix_j$ for all $i,j$.
It is easy to see that
$R$ is a Poisson double extension of  Poisson subalgebra ${\bf k}[x_1,\cdots, x_{n-2}]$, since $R={\bf k}[x_1,\cdots,x_{n-2}][x_{n-1}; \alpha_1, 0]_p[x_n; \alpha_2, 0]_p$ is an iterated Poisson polynomial extension with $\nu_2=0$ and $\alpha_2(x_i)=\mu_{ni}x_i$ for $i=1,2,\cdots,n-1$, which satisfy the conditions of Proposition 4.3.

The Poisson algebra $\mathcal{O}(M_2({\bf k}))={\bf k}[a,b,c,d]$
with Poisson bracket
\[\begin{tabular}{lll}
$\{b,c\}=0$, &$\{b,a\}=-2ba$, &$ \{c,a\}=-2ca$,\\
$\{b,d\}=2bd$, &$\{c,d\}=2cd$, &$\{a,d\}=4bc$\end{tabular}\]
can be written in terms of  iterated Poisson polynomial extension
${\bf k}[b,c][a;\alpha_1,0]_p[d;\alpha_2,\nu_2]_p$ by \cite[2.4]{Oh8}. Since $\mathcal{O}(M_2({\bf k}))$ satisfies
the conditions of Proposition~\ref{itdoPe}, it is a Poisson double extension of ${\bf k}[b,c]$ with variables $a,d$ and a suitable PDE-data.
\end{exam}

%

\begin{exam}\label{DIM3}
Let $B$ be a connected graded right double extension of ${\bf k}[x]$ with $\deg x=\deg y_1=\deg y_2=1$. Then $B$ is generated by  $x,y_1,y_2$ subject to the relations
$$
\begin{aligned}
y_2y_1&=\mu_1 y_1^2+\mu_2y_1 y_2+\mu_3xy_1+\mu_4xy_2+\mu_5x^2,\\
y_1x&=\mu_6xy_1+\mu_7xy_2+\mu_8x^2,\\
y_2x&=\mu_9xy_1+\mu_{10}xy_2+\mu_{11}x^2,
\end{aligned}
$$
where $\mu_i\in{\bf k}$. In fact, $B$  is a connected graded Artin-Schelter regular algebra of global dimension 3. See \cite[Example 4.1]{ZhZh}.

Fix an element $\lambda\in{\bf k}\setminus\{0,1\}$ and set
$\Bbb F={\bf k}[t,t^{-1}]$. For all $i$ such that $1\leq i\leq 11$ except for $2,6,10$, there exist $f_i\in\Bbb F$ such that $f_i(\lambda)=\mu_i$ and $f_i(1)=0$ by Lagrange's Interpolation Formula \cite[Remark 0.6]{Row}. For $i=2, 6, 10$, choose $f_i\in\Bbb F$ such that $f_i(\lambda)=\mu_i$ and $f_i(1)=1$.
Let $A$ be an $\Bbb F$-algebra generated by $x,y_1,y_2$ subject to the relations
$$
\begin{aligned}
y_2y_1&=f_1 y_1^2+f_2y_1 y_2+f_3xy_1+f_4xy_2+f_5x^2,\\
y_1x&=f_6xy_1+f_7xy_2+f_8x^2,\\
y_2x&=f_9xy_1+f_{10}xy_2+f_{11}x^2.
\end{aligned}
$$
By Bergman's diamond lemma \cite{Be}, the set $\{x^iy_1^jy_2^k|i,j,k\geq0\}$ is an $\Bbb F$-basis and thus $A$ is a free left $\Bbb F[x]$-module with a basis $\{y_1^jy_2^k|j,k\geq0\}$. In particular, $t-1$ is a nonzero, nonunit and non-zero-divisor in $A$. Moreover, $A/(t-\lambda)A$ is isomorphic to the right double extension $B$
and $A/(t-1)A$  as a ring is isomorphic to the commutative polynomial ring ${\bf k}[x,y_1,y_2]$. Thus  the 5-tuple $(\{\lambda\}, \Bbb F, \Bbb F[x], A, t-1)$ satisfies Assumption~\ref{ASSUM}. It follows that $A_1:={\bf k}[x,y_1,y_2]$ is a Poisson algebra with Poisson bracket
$$
\begin{aligned}
\{y_2,y_1\}&=f_1'(1) y_1^2+f_2'(1)y_1 y_2+f_3'(1)xy_1+f_4'(1)xy_2+f_5'(1)x^2,\\
\{y_1,x\}&=f_6'(1)xy_1+f_7'(1)xy_2+f_8'(1)x^2,\\
\{y_2,x\}&=f_9'(1)xy_1+f_{10}'(1)xy_2+f_{11}'(1)x^2,
\end{aligned}
$$
where $f_i'(1)$ is the formal derivative of $f_i$ at $t=1$, and $B\cong A/(t-\lambda)A$ is a deformation of $A_1$ by Theorem~\ref{DPOE}. The Poisson algebra $A_1$ is a Poisson double extension of ${\bf k}[x]$ with
PDE-data $\{q,\alpha,\nu,w\}$, where
$$\begin{aligned}
q&=\{f_1'(1),f_2'(1)\}, &w&=\{f_3'(1)x,f_4'(1)x,f_5'(1)x^2\},\\
\alpha(x)&=\left(\begin{matrix} f_6'(1)x&f_7'(1)x\\ f_9'(1)x&f_{10}'(1)x\end{matrix}\right),&\nu(x)&=\left(\begin{matrix} f_8'(1)x^2\\ f_{11}'(1)x^2\end{matrix}\right).
\end{aligned}$$
\end{exam}

\begin{lem}\label{NORMA}
Let $A$ be a Poisson polynomial extension of a Poisson algebra $R$. Namely, $A=R[z;\beta, \nu]_p$.
If $z=z'+s$ for some $s\in R$, then $A=R[z';\beta,\nu']_p$ for some $\nu'$.
\end{lem}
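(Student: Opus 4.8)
The plan is to exploit the fact that the substitution $z = z' + s$ with $s \in R$ leaves the underlying commutative polynomial ring untouched, so that $A = R[z] = R[z']$ as rings, and that it leaves the Poisson subalgebra $R$ and the ``leading coefficient'' $\beta$ of the new variable unchanged; only the ``constant part'' $\nu$ of the bracket needs to be adjusted. Thus the entire content is to identify the correct replacement $\nu'$ and to check it is admissible as the data of a Poisson polynomial extension.

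First I would compute the bracket of $z'$ against an arbitrary $a \in R$. Using bilinearity of the Poisson bracket, the defining relation $\{z,a\} = \beta(a)z + \nu(a)$, and $z = z' + s$, one gets
\begin{align*}
\{z',a\} &= \{z,a\} - \{s,a\} = \beta(a)z + \nu(a) - \{s,a\} \\
&= \beta(a)z' + \bigl(\beta(a)s + \nu(a) - \{s,a\}\bigr).
\end{align*}
This forces the definition $\nu'(a) := \nu(a) + \beta(a)s - \{s,a\}$ (where $\{s,a\} = \{s,a\}_R$), so that $\{z',a\} = \beta(a)z' + \nu'(a)$ for all $a \in R$.

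Next I would verify that $\nu'$ is a derivation of $R$. Linearity over $\mathbf{k}$ is immediate, since $\nu$, $\beta$ and the hamiltonian $\ham(s) = \{s,-\}$ are all $\mathbf{k}$-linear. For the Leibniz rule, I would expand $\nu'(ab)$ using that each of $\nu$, $\beta$ and $\{s,-\}$ satisfies Leibniz, and then regroup, using commutativity of $R$, to obtain $\nu'(ab) = a\,\nu'(b) + b\,\nu'(a)$. This short computation is the only point in the argument that requires any care.

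Finally I would appeal to the characterization of Poisson polynomial extensions recalled before the statement (\cite[Theorem 1.1]{Oh8}). Since $A$ is already a Poisson algebra containing $R$ as a Poisson subalgebra, the commutative ring $R[z']$ carries a Poisson bracket for which $\beta$ is a Poisson derivation and $\{z',a\} = \beta(a)z' + \nu'(a)$; by the ``only if'' direction of that characterization, the pair $(\beta,\nu')$ automatically satisfies the compatibility condition \eqref{SKEPOI}. Hence $A = R[z';\beta,\nu']_p$, which is the claim. Note that no independent re-proof of \eqref{SKEPOI} for $(\beta,\nu')$ is needed: it is forced by the Poisson structure on $A$ that we already have.
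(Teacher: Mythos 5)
Your proof is correct, and its core coincides with the paper's: you arrive at exactly the same map $\nu'$ (the paper defines $\nu'(r)=\beta(r)s+\nu(r)+\{r,s\}$, which equals your $\nu(r)+\beta(r)s-\{s,r\}$ by antisymmetry of the bracket), and you make the same observation that $\nu'$ is a derivation because $\nu$, $\beta(-)s$ and the hamiltonian $\{-,s\}$ all are. The one genuine difference is the treatment of the compatibility condition \eqref{SKEPOI}: the paper verifies directly that the pair $(\beta,\nu')$ satisfies \eqref{SKEPOI}, using that $(\beta,\nu)$ does, and then invokes the ``if'' direction of \cite[Theorem 1.1]{Oh8} to recognize $A$ as $R[z';\beta,\nu']_p$; you instead observe that the Poisson structure on $A=R[z]=R[z']$ already exists, restricts to that of $R$, and satisfies $\{z',a\}=\beta(a)z'+\nu'(a)$, so the ``only if'' direction of the same theorem yields \eqref{SKEPOI} for $(\beta,\nu')$ with no computation at all. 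Your route is slightly cleaner, trading the paper's (easy but unwritten) verification for an argument that the condition is forced by the ambient bracket; the paper's route has the merit of exhibiting the compatibility explicitly, which is what one would need if the bracket on $R[z']$ had to be constructed rather than inherited. Either way the conclusion $A=R[z';\beta,\nu']_p$ follows, since a Poisson bracket on the polynomial ring is determined by its values on $R$ and on the generator, and your change of variable preserves the $R$-module basis $\{z'^i\mid i\geq 0\}$.
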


\begin{proof}
 Define a ${\bf k}$-linear map $\nu'$ by
$$\nu':R\longrightarrow R,\ \ \nu'(r)=\beta(r)s+\nu(r)+\{r,s\}.$$
It is easy to check $\nu'$ is a derivation on $R$, since $\beta$, $\nu$ and $\{-,s\}$ are all derivations. Since $(\beta,\nu)$ satisfies (\ref{SKEPOI}) by \cite[Theorem 1.1]{Oh8}, it is checked easily that the pair $(\beta,\nu')$ also satisfies (\ref{SKEPOI}). Hence $A$ is the  Poisson polynomial extension $A=R[z';\beta,\nu']_p$ by \cite[Theorem 1.1]{Oh8}
\end{proof}

\medskip
In the next example, we see that there exists  a Poisson double extension that is not an iterated Poisson polynomial extension.

\begin{exam}\label{NONIT}
 We  consider a special case of  Example~\ref{DIM3}. Retain the notations of  Example~\ref{DIM3}. Let
$$\begin{aligned}
\mu_1&=\mu_3=\mu_4=\mu_6=\mu_8=\mu_{10}=\mu_{11}=0,\\
\mu_2&=-1, \ \ \mu_5=\mu_7=\mu_9=1
\end{aligned}$$
in Example~\ref{DIM3}.
That is, $B$ is the algebra generated by $x,y_1,y_2$ subject to the relations
$$\begin{aligned}
y_2y_1 &= -y_1y_2 + x^2,\\ y_1x &= xy_2, \\ y_2x &= xy_1 .
\end{aligned}$$

The corresponding $\Bbb F$-algebra $A$ is generated by $x,y_1,y_2$ subject to the relations
$$\begin{aligned}
y_2y_1 &= \left(-\frac{t-1}{\lambda-1}+\frac{t-\lambda}{1-\lambda}\right)y_1y_2 + \frac{t-1}{\lambda-1}x^2,\\
y_1x &=\frac{t-\lambda}{1-\lambda}xy_1+ \frac{t-1}{\lambda-1}xy_2,\\
 y_2x &= \frac{t-1}{\lambda-1}xy_1 + \frac{t-\lambda}{1-\lambda}xy_2.
\end{aligned}$$
Thus $A/(t-\lambda)A\cong B$ and the  corresponding Poisson double extension $A_1$ of ${\bf k}[x]$ is the Poisson algebra ${\bf k}[x,y_1,y_2]$  with the Poisson bracket
\begin{equation}\label{NIPPE1}
\{y_2,y_1\}  = -\frac{2}{\lambda-1}y_1y_2 + \frac{1}{\lambda-1}x^2,
\end{equation}
\begin{equation}\label{NIPPE2}
\{y_1,x\} =-\frac{1}{\lambda-1}xy_1+ \frac{1}{\lambda-1}xy_2,
\end{equation}
\begin{equation}\label{NIPPE3}
\{ y_2,x\} = \frac{1}{\lambda-1}xy_1 - \frac{1}{\lambda-1}xy_2.
\end{equation}

By setting $y=y_1-y_2$ and $z=y_1+y_2$,  the Poisson algebra $A_1$ can be identified with the Poisson algebra ${\bf k}[x, y, z]$ with Poisson bracket
\begin{equation}\label{ps}
\{x, y\}=\frac{2}{\lambda-1}xy,\; \{y, z\}=\frac{-2}{\lambda-1}x^2+\frac{-1}{\lambda-1}y^2+\frac{1}{\lambda-1}z^2, \; \{z, x\}=0.
\end{equation}
It is easy to compute that its modular derivation $D$  is given by
$$D(x)=\frac{2x}{\lambda-1},\  D(y)=\frac{2z-2y}{\lambda-1},\  D(z)=\frac{2y}{\lambda-1}.$$

We will show that $A_1$ is not an iterated Poisson polynomial extension over ${\bf k}[x]$. This phenomenon is similar to \cite[Example 4.1]{ZhZh}.
Suppose that $A_1$ is an iterated  Poisson polynomial extension over ${\bf k}[x]$. Then
 $$A_1={\bf k}[x][u;\beta',\nu']_p[w; \beta,\nu]_p.$$
 Since the Poisson subalgebra ${\bf k}[x][u;\beta',\nu']_p$ of $A_1$ is also a Poisson polynomial extension, by Lemma~\ref{NORMA}, we can assume that $u=g_0(x,z)z+\sum_{i=1}^{r}g_i(x,z)y^i$ for some $r\in \mathbb{N}$ and polynomials $g_i(x, z) \in {\bf k}[x, z]$. We also assume that not all the polynomials $g_i(x, z)$ are zero.
 Then by definition,
 $$\{u, x\}=\beta'(x)u+\nu'(x).$$
 Computing this equation by using \eqref{ps},
 \begin{equation}\label{eq}
  \sum_{i=1}^{r}\frac{-2i}{\lambda-1}xg_i(x,z)y^i=\beta'(x)\left(g_0(x,z)z+\sum_{i=1}^{r}g_i(x,z)y^i\right)+\nu'(x).
 \end{equation}
Since the left hand side  of this equation can be divided by $y$, this implies that
$$\beta'(x)g_0(x,z)z+\nu'(x)=0.$$
Namely, $\nu'(x)=\beta'(x)g_0(x,z)=0.$

If $\beta'(x)=0$, then $g_i(x,z)=0$ for all $i\geq 1$  by equation~\eqref{eq}. Hence, in this case,
\begin{equation}\label{case1}
u=g_0(x,z)z:=\sum_{i=1}^{m}f_i(x)z^i, \ \ \beta'=0, \ \ \nu'=0,
\end{equation}
for some $m\in \mathbb{N}$, and polynomials $f_i(x)\in {\bf k}[x]$.

If $g_0(x,z)=0$, then there exists $n\in \mathbb{N}$ such that $g_n(x,z)\neq 0$ by assumption.  And the equation~\eqref{eq} turns out to be
$$  \sum_{i=1}^{r}\left(\frac{-2i}{\lambda-1}x-\beta'(x)\right)g_i(x,z)y^i=0.$$
In this case, $\left(\frac{-2i}{\lambda-1}x-\beta'(x)\right)g_i(x,z)=0$  for all $i\geq 1$. Since $g_n(x,z)\neq 0$,
\begin{equation}\label{case2}
\beta'(x)=\frac{-2n}{\lambda-1}x,\  \nu'=0,\ \mbox{and} \ u=g_n(x, z)y^n:=p(x, z)y^n, \,
\end{equation}
where  $p(x, z)$ is a nonzero polynomial in  ${\bf k}[x, z]$.

For case \eqref{case1}, the Poisson structure of the Poisson subalgebra ${\bf k}[x][u;\beta',\nu']_p$ is trivial. By \cite[Theorem~3.5]{Wang},
$D(u)=-\beta(u)\in {\bf k}[x, u]$. And
$$\begin{aligned}
D(u)&=D\left(\sum_{i=1}^{m}f_i(x)z^i\right)=\sum_{i=1}^{m}D(f_i(x))z^i+f_i(x)iz^{i-1}D(z)\\
&=\sum_{i=1}^{m}\left(f_i'(x)z^i\frac{2x}{\lambda-1}+f_i(x)iz^{i-1} \frac{2y}{\lambda-1}\right).\end{aligned}$$
Since $D(u)=-\beta(u)\in {\bf k}[x, u]\subset {\bf k}[x, z]$, this implies that $\sum_{i=1}^{m}f_i(x)iz^{i-1}=0$, i.e.,
$ f_i(x)=0$ for all $i$.
This contradicts the assumption  $u\neq 0$.

For case \eqref{case2}, the modular derivation $\phi$ of  ${\bf k}[x][u;\beta',\nu']_p$ with respect to the volume form $\de\!x\wedge\de\!u$ is given by
$$\phi(x)=\frac{2nx}{\lambda-1}, \ \  \phi(u)=\frac{-2nu}{\lambda-1}.$$
Again, by  \cite[Theorem~3.5]{Wang}, $D(x)=\frac{2nx}{\lambda-1}-\beta(x)$, which implies that $\beta(x)=\frac{2(n-1)x}{\lambda-1}$. Moreover,
\begin{equation}\label{Du}
\beta(u)=\frac{-2nu}{\lambda-1}-D(u)=-\frac{\partial p(x,z)}{\partial x}\frac{2xy^n}{\lambda-1}-p(x,z)\frac{2ny^{n-1}z}{\lambda-1}-\frac{\partial p(x,z)}{\partial z}\frac{2y^{n+1}}{\lambda-1}.
\end{equation}

By equations~\eqref{case2} and \eqref{Du},
$$\begin{aligned}
\beta(\{u, x\}) &= \beta\left(\frac{-2n}{\lambda-1}xu\right)\\
&=\frac{-4nx}{(\lambda-1)^2}[(n-1) p(x,z)y^n-\frac{\partial p(x,z)}{\partial x}xy^n-np(x,z)y^{n-1}z-\frac{\partial p(x,z)}{\partial z}y^{n+1}],\end{aligned}$$
\begin{align*}
\{\beta(u), x\}&=\left\{-\frac{\partial p(x,z)}{\partial x}\frac{2xy^n}{\lambda-1}-p(x,z)\frac{2ny^{n-1}z}{\lambda-1}-\frac{\partial p(x,z)}{\partial z}\frac{2y^{n+1}}{\lambda-1}, x\right\}\\
&=-\frac{\partial p(x,z)}{\partial x}\frac{2x}{\lambda-1}\{y^n, x\}-p(x,z)\frac{2nz}{\lambda-1}\{y^{n-1},x\}-\frac{\partial p(x,z)}{\partial z}\frac{2}{\lambda-1}\{y^{n+1},x\} \\
&=\frac{\partial p(x,z)}{\partial x}\frac{4nx^2y^n}{(\lambda-1)^2}+p(x,z)\frac{4n(n-1)xy^{n-1}z}{(\lambda-1)^2}+\frac{\partial p(x,z)}{\partial z}\frac{4(n+1)xy^{n+1}}{(\lambda-1)^2} , \\
\end{align*}
and
$$
\{u, \beta(x)\}=\left\{u, \frac{2(n-1)}{\lambda-1}x\right\}=-\frac{4n(n-1)}{(\lambda-1)^2}xu=-\frac{4n(n-1)}{(\lambda-1)^2}p(x,z)xy^n.$$
By definition, $\beta$ is a Poisson derivation of Poisson subalgebra ${\bf k}[x][u;\beta',\nu']_p$,
which implies that
$$\beta(\{u, x\})=\{\beta(u),x\}+\{u, \beta(x)\}.$$
Hence $$np(x,z)y^{n-1}z=\frac{\partial p(x, z)}{\partial z}y^{n+1}.$$ This is absurd.
\end{exam}

\begin{exam}
For $0\neq q\in{\bf k}$, let $T_q$ be the ${\bf k}$-algebra given in \cite[Proposition 6.6]{JoSa}. That is, $T_q$ is a ${\bf k}$-algebra generated by $x,y,z$ subject to the relations
$$\begin{aligned}
yx &= q^{-1}xy+(q-q^{-1})z,\\
xz&=q^{-1}zx+(1-q^{-2})y,\\
yz&=qzy+(q^{-1}-q)x.
\end{aligned}$$
Observe that $T_q$ is a right double extension of ${\bf k}[z]$ with two variables  $x,y$ and DE-data $\{P,\sigma,\delta,\tau\}$, where
$$\begin{aligned}
P&=\{0, q^{-1}\},& \tau&=\{0, 0, (q-q^{-1})z\},\\
\sigma(z)&=\left(\begin{matrix} q^{-1}z&1-q^{-2}\\ q^{-1}-q&qz\end{matrix}\right),&\delta(z)&=\left(\begin{matrix} 0\\ 0\end{matrix}\right).
\end{aligned}$$

Set $\Bbb F:={\bf k}[t,t^{-1}]$. Replacing $q$ in $T_q$ with $t$, we get a right double extension $T_t$ of $\Bbb F[z]$ with variables $x,y$ and DE-data $\{P',\sigma',\delta',\tau'\}$, where
$$\begin{aligned}
P'&=\{0,t^{-1}\},& \tau'&=\{0,0, (t-t^{-1})z\},\\
\sigma'(z)&=\left(\begin{matrix} t^{-1}z&1-t^{-2}\\ t^{-1}-t&tz\end{matrix}\right),&\delta'(z)&=\left(\begin{matrix} 0\\ 0\end{matrix}\right).
\end{aligned}$$
Set $\Lambda:={\bf k}\setminus\{0,1\}$. By \cite[Proposition 6.6]{JoSa}, $T_q$ is an integral domain and thus $(\Lambda,\Bbb F, \Bbb F[z], T_t, t-1)$ satisfies Assumption~\ref{ASSUM}. Hence
$T_1:=T_t/(t-1)T_t\cong {\bf k}[z,x,y]$ is a Poisson ${\bf k}$-algebra with Poisson bracket
$$\begin{aligned}
\{y,x\} &= -xy+2z,\\
\{x,z\}&=-zx+2y,\\
\{y,z\}&=zy-2x
\end{aligned}$$
appearing in \cite[Example 4.4]{JoOh}, which is a Poisson double extension of ${\bf k}[z]$ with variables $x,y$ and  PDE-data $\{p,\alpha,\nu,w\}$, where
$$\begin{aligned}
p&=\{0,-1\},&
\alpha(z)&=\left(\begin{matrix}-z&2\\ -2&z\end{matrix}\right),&\nu(z)&=\left(\begin{matrix}0\\ 0\end{matrix}\right),&w&=\{0,0,2z\}.
\end{aligned}$$
Moreover, $T_q\cong T_t/(t-q)T_t$ is a deformation of $T_1$ by Theorem~\ref{DPOE}.
\end{exam}

\begin{exam}
Let $0\neq h\in{\bf k}$. Let $B(h)$ be the  graded double extension given in \cite[Example 4.2]{ZhZh}.
That is, $B(h)$ is the graded algebra generated by $x_1,x_2,y_1,y_2$ subject to the relations
$$\begin{array}{lcccccr}
x_2x_1=-x_1x_2&&&&&&\\
y_2y_1=-y_1y_2&&&&&&\\
y_1x_1=h(x_1y_1&+&x_2y_1&+&x_1y_2&&)\\
y_1x_2=h(&&&+&x_1y_2&&)\\
y_2x_1=h(&+&x_2y_1&&&&)\\
y_2x_2=h(&-&x_2y_1&-&x_1y_2&+&x_2y_2).
\end{array}$$
The last four relations are associated to the matrix
$$\Sigma=h\left(\begin{matrix}1&1&1&0\\0&0&1&0\\0&1&0&0\\0&-1&-1&1
\end{matrix}\right)$$
and $B(h)$ is a connected graded Artin-Schelter regular algebra of global dimension 4 as observed in \cite[Example 4.2 and Corollary 4.7]{ZhZh}.

Fix  an element $\lambda\in{\bf k}\setminus\{0,1\}$ and set $\Bbb F:={\bf k}[t,t^{-1}]$. Let $A$ be an ${\Bbb F}$-algebra generated by $x_1,x_2, y_1,y_2$ subject to relations
$$\begin{array}{lcccccr}
x_2x_1=f_1(t)x_1x_2&&&&&&\\
y_2y_1=f_2(t)y_1y_2&&&&&&\\
y_1x_1=f_3(t)x_1y_1&+&f_4(t)x_2y_1&+&f_5(t)x_1y_2&&\\
y_1x_2=&+&f_6(t)x_2y_1&+&f_7(t)x_1y_2&&\\
y_2x_1=&+&f_8(t)x_2y_1&+&f_9(t)x_1y_2&&\\
y_2x_2=&+&f_{10}(t)x_2y_1&+&f_{11}(t)x_1y_2&+&f_{12}(t)x_2y_2,
\end{array}$$
where $f_i(t)\in\Bbb F$ such that
$$\begin{aligned}
f_1(\lambda)&=-1, &f_1(1)&=1;&f_2(\lambda)&=-1, &f_2(1)&=1;
\end{aligned}$$
$$\left(\begin{matrix}f_3(\lambda)&f_4(\lambda)&f_5(\lambda)&0\\0&f_6(\lambda)&f_7(\lambda)&0\\0&f_8(\lambda)&f_9(\lambda)&0\\0&f_{10}(\lambda)&f_{11}(\lambda)&f_{12}(\lambda)
\end{matrix}\right)=\Sigma,
$$
$$\left(\begin{matrix}f_3(1)&f_4(1)&f_5(1)&0\\0&f_6(1)&f_7(1)&0\\0&f_8(1)&f_9(1)&0\\0&f_{10}(1)&f_{11}(1)&f_{12}(1)
\end{matrix}\right)=\left(\begin{matrix}1&0&0&0\\0&1&0&0\\0&0&1&0\\0&0&0&1
\end{matrix}\right).
$$
(Such $f_i(t)$'s exist by Lagrange's Interpolation Formula. See \cite[Remark 0.6]{Row}.)
Observe that $A$ is a free $\Bbb F$-module with basis $\{x_1^ix_2^jy_1^ky_2^\ell|i,j,k,\ell\geq0\}$ by Bergman's diamond lemma \cite{Be}.
Let $R$ be  the $\Bbb F$-subalgebra of $A$ generated by $x_1, x_2$. Note that $R$ is isomorphic to the $\Bbb F$-algebra generated by $x_1,x_2$  subject to the relation
$$x_2x_1=f_1(t)x_1x_2,$$
 that $A$ is a free left $R$-module with basis  $\{y_1^ky_2^\ell|k,\ell\geq0\}$
 and that
$A/(t-1)A$ is commutative.  Moreover note that the 5-tuple $(\{\lambda\},\Bbb F, R,A, t-1)$ satisfies
Assumption~\ref{ASSUM}.  Hence $A_1:=A/(t-1)A={\bf k}[x_1,x_2,y_1,y_2]$ is a Poisson algebra with Poisson bracket
$$\begin{array}{lcccccr}
\{x_2, x_1\}=f_1'(1)x_1x_2,&&&&&& \\
\{y_2,y_1\}=f_2'(1)y_1y_2,&&&&&&\\
\{y_1,x_1\}=f_3'(1)x_1y_1&+&f_4'(1)x_2y_1&+&f_5'(1)x_1y_2,&&\\
\{y_1,x_2\}=&+&f_6'(1)x_2y_1&+&f_7'(1)x_1y_2,&&\\
\{y_2,x_1\}=&+&f_8'(1)x_2y_1&+&f_9'(1)x_1y_2,&&\\
\{y_2,x_2\}=&+&f_{10}'(1)x_2y_1&+&f_{11}'(1)x_1y_2&+&f_{12}'(1)x_2y_2,
\end{array}$$
where $f_i'(1)$ is the formal derivatives of $f_i(t)$ at $t=1$,
and $B(h)\cong A/(t-\lambda)A$ is a deformation of $A_1$ by Theorem~\ref{DPOE}.
Observe that $A_1={\bf k}[x_1,x_2,y_1,y_2]$ is a Poisson double extension of ${\bf k}[x_1,x_2]$ with an appropriate PDE-data $\{q,\alpha,\nu,w\}$.
\end{exam}

\noindent
{\bf Acknowledgments:} All the authors would like to thank the referees for their helpful suggestions and comments.
The first author is supported by the NSFC (key project 11331006). The second author is supported by National  Research Foundation of Korea,  NRF-2017R1A2B4008388, and thanks the Korea Institute for Advanced Study for the warm hospitality during the preparation of this paper.
The third author is supported by the NSFC (projects 11301180 and 11771085).



\bibliographystyle{amsplain}

\providecommand{\bysame}{\leavevmode\hbox to3em{\hrulefill}\thinspace}
\providecommand{\MR}{\relax\ifhmode\unskip\space\fi MR }
\providecommand{\MRhref}[2]{%
  \href{http://www.ams.org/mathscinet-getitem?mr=#1}{#2}
}
\providecommand{\href}[2]{#2}

\end{document}